\documentclass[12pt]{amsart}


\usepackage{amsfonts,amsmath,latexsym,amssymb,verbatim,amsbsy,times}
\usepackage{amsthm}
\usepackage{graphicx}
\usepackage{caption}
\usepackage{subcaption}
\captionsetup{justification=centering}
\usepackage{dsfont}

\usepackage[top=0.8 in, bottom=0.8in, left=0.8in, right=0.8in]{geometry}

\usepackage[colorlinks=true, pdfstartview=FitV, linkcolor=blue,citecolor=green, urlcolor=blue]{hyperref}

\usepackage{enumitem}


\theoremstyle{plain}
\newtheorem{THEOREM}{Theorem}[section]
\newtheorem{COROL}[THEOREM]{Corollary}
\newtheorem{LEMMA}[THEOREM]{Lemma}
\newtheorem{PROP}[THEOREM]{Proposition}

\theoremstyle{definition}

\theoremstyle{remark}
\newtheorem{REMARK}[THEOREM]{Remark}




\newcommand{\N}{\ensuremath{\mathbb{N}}}   
\newcommand{\R}{\ensuremath{\mathbb{R}}}   
\newcommand{\T}{\ensuremath{\mathbb{T}}}   

\def \one {{\mathds{1}}}


\def \a {\alpha}
\def \b {\beta}
\def \d {\delta}
\def \g {\gamma}
\def \e {\varepsilon}
\def \f {\varphi}

\def \k {\kappa}
\def \l {\lambda}

\def \n {\nabla}

\def \O {\Omega}

\def \bu {{\bf u}}

\def \bV {{\bf V}}

\def \bX {{\bf X}}

\def \id {{\bf id}}

\def \cA {\mathcal{A}}

\def \cD {\mathcal{D}}

\def \cH {\mathcal{H}}

\def \cM {\mathcal{M}}

\def \cP {\mathcal{P}}

\def \cZ {\mathcal{Z}}

\def\cprime{$'$}


\def \< {\langle}
\def \> {\rangle}
\def \p {\partial}

\def \ss {\subset}


\DeclareMathOperator{\diam}{diam} %
\DeclareMathOperator{\supp}{supp} %
\DeclareMathOperator{\diver}{div} %
\DeclareMathOperator{\tr}{Tr} %

\newcommand{\dist}[2]{\mathrm{dist}(#1,#2)}

\def \cP {\mathcal{P}}

\def \dd  {\mathrm{d}}
\def \dalpha  {\, \mathrm{d}\alpha}
\def \dt  {\, \mathrm{d}t}
\def \dt  {\, \mathrm{d}t}

\def \dx  {\, \mathrm{d}x}
\def \dy  {\, \mathrm{d}y}

\def \ddt  {\frac{\mbox{d\,\,}}{\mbox{d}t}}
\def \dm  {\, \mathrm{d}m}
\def \dbarm  {\, \mathrm{d}\overline{m}}
\def \dr  {\, \mathrm{d}r}

\def \dnu  {\, \mathrm{d}\nu}

\def \dbmu  {\, \mathrm{d}\overline{\mu}}
\def \ds  {\, \mbox{d}s}

\def \barrho {\overline{\rho}}
\def \barm {\overline{m}}
\def \barX {\overline{X}}
\def \barbX {\overline{\mathbf{X}}}
\def \uphi  {\underline{\phi}}

\begin{document}
	
\title[Mass Concentration]{Geometric Structure of Mass Concentration Sets \\ for Pressureless Euler Alignment Systems}

\author{Daniel Lear}
\address{Department of Mathematics, Statistics, and Computer Science, University of Illinois, Chicago}
\email{lear@uic.edu}

\author{Trevor M. Leslie}
\address{Department of Mathematics, University of Wisconsin, Madison}
\email{tleslie2@wisc.edu}

\author{Roman Shvydkoy}
\address{Department of Mathematics, Statistics, and Computer Science, University of Illinois, Chicago}
\email{shvydkoy@uic.edu}

\author{Eitan Tadmor}
\address{Department of Mathematics, Center for Scientific Computation and Mathematical Modeling (CSCAMM), and Institute for Physical Sciences \& Technology (IPST), University of Maryland, College Park}
\email{tadmor@cumd.edu}

\thanks{\textbf{Acknowledgment.} The work of RS was supported in part by NSF	grant DMS-1813351. ET research was supported in part by NSF and ONR grants DMS16-13911 and N00014-1512094. }

\date{\today}

\begin{abstract}
We study the limiting dynamics of the Euler Alignment system with a smooth, heavy-tailed interaction kernel $\phi$ and unidirectional velocity $\mathbf{u} = (u, 0, \ldots, 0)$.  We demonstrate a striking correspondence between the entropy function $e_0 = \partial_1 u_0 + \phi*\rho_0$ and the limiting `concentration set', i.e., the support of the singular part of the limiting density measure. In a typical scenario, a flock experiences \textit{aggregation} toward a union of $C^1$ hypersurfaces: the image of the zero set of $e_0$ under the limiting flow map.  This correspondence also allows us to make statements about the fine properties associated to the limiting dynamics, including a sharp upper bound on the dimension of the concentration set,  depending only on the smoothness of $e_0$.  In order to facilitate and contextualize our analysis of the limiting density measure, we also include an expository discussion of the wellposedness, flocking, and stability of the Euler Alignment system, most of which is new.
\end{abstract}

\maketitle	

\vspace{-8mm}

\section{The Euler Alignment System and its Long-Time Dynamics}

We consider the Euler Alignment system on $\R^n$, which is usually written in the following way:

\begin{equation}
\label{e:m}
\left\{ 
\begin{split}
& \p_t\rho(x,t) + \diver(\rho \mathbf{u})(x,t) = 0, \quad \quad x\in \R^n, \\
& \p_t \mathbf{u}(x,t) + \mathbf{u}\cdot \n \mathbf{u}(x,t) = \k\int_{\R^n} \phi(x-y)(\mathbf{u}(y,t) - \mathbf{u}(x,t))\rho(y,t)\dy, \\
& \mathbf{u}(x,0) = \mathbf{u}_0(x); \; \rho(x,0) = \rho_0(x)\ge 0, \;\; \int_{\R^n} \rho_0(x)\dx = M_0<+\infty.
\end{split}\right. 
\end{equation}

Here $\rho$ denotes the density profile, assumed to be compactly supported at time zero, and $\mathbf{u}$ denotes the ($\R^n$-valued) velocity field.  The function $\phi$ represents the (nonnegative) communication protocol, and the parameter $\k>0$ governs the strength of the communications.    

In our analysis, we will make two main assumptions.  First, we will assume that $\phi$ is smooth, radially decreasing, and \textit{heavy-tailed}, i.e., $\int_0^\infty \phi(r)\dr = +\infty$. Second, and most importantly, we will consider velocities that are \textit{unidirectional}; that is, 
\begin{equation}\label{e:uni}
\mathbf{u}(x,t) = (u(x,t), 0, \ldots, 0).
\end{equation}
The utility of these assumptions will be clarified below.  

\subsection{Features of the Euler Alignment System}

\subsubsection{Flocking and Alignment}
The Euler Alignment system provides a hydrodynamic description of the celebrated Cucker--Smale system of ODE's \cite{CS2007a}, \cite{CS2007b}, the salient feature of which is its `flocking' dynamics.  In the hydrodynamic setting, we use the following terminology:
\begin{align}
\label{e:Flocking}
\text{Flocking:} & \quad \sup_{t\ge 0} \supp(\rho(\cdot,t)) = \overline{\cD} <+\infty. \\
\label{e:Alignment}
\text{Alignment:} & \quad \mathbf{u}(\cdot, t)\to \overline{\mathbf{u}} = \text{ const.} \\
\label{e:SFlocking}
\text{Strong Flocking:} & \quad \rho(x-t\overline{\mathbf{u}}, t)\to \rho_\infty(x).
\end{align}
Of course, parsing \eqref{e:Alignment} and \eqref{e:SFlocking} requires specification of the sense in which the convergences take place; the topologies used are context-dependent. There is only one possible candidate for the putative limiting velocity $\overline{\bu}$, namely ratio $\overline{\bu}=\overline{\bu}_0: = \frac{1}{M_0} \int_{\R^n} \rho_0 \bu_0(x)\dx$. The Euler Alignment system is also studied in the periodic setting $x\in \T^n$, where \eqref{e:Alignment} and \eqref{e:SFlocking} are still meaningful but \eqref{e:Flocking} is not. 

The threshold question of whether any of \eqref{e:Flocking}, \eqref{e:Alignment}, \eqref{e:SFlocking} occurs is a well-studied problem, at both the discrete and hydrodynamic levels. Despite the copious effort devoted to the investigation of this phenomenon, much remains to be understood. It is generally difficult to determine whether the agents or trajectories spread out slowly enough that $\phi$ can work to align their velocities (thus decreasing their tendency to spread out) before they escape the regime where $\phi$ is strong enough to do so.  Working in the  context of heavy-tailed kernels eliminates most of these issues: any smooth solution in this case experiences flocking and alignment.  The next stage in studying long time behavior can be focused on understanding the limiting density profile, which will exist in the space of measures even if the density becomes unbounded as $t\to +\infty$.  In this present work we give an exhaustive answer to this question for the class of unidirectional flocks \eqref{e:uni}.

\subsubsection{Wellposedness Considerations and the Quantity $e$}

\label{ss:wpe}

A quantity central to the analysis of \eqref{e:m} is 
\begin{equation}
e(x,t) = \diver \mathbf{u}(x,t) + \phi*\rho(x,t),
\end{equation}
which satisfies the equation 
\begin{equation}
\label{e:nDe}
\p_t e + \diver(e\mathbf{u}) =  (\diver \mathbf{u})^2 - \tr[(\n \mathbf{u})^2].
\end{equation}

The equation \eqref{e:nDe} becomes a conservation law with right hand side zero for unidirectional solutions (and in particular in spatial dimension one).  Equipped with this additional structure, Carrillo, Choi, Tadmor, and Tan proved in \cite{CCTT2016} that when $n=1$, a unique global-in-time solution to \eqref{e:m} exists for sufficiently regular initial data if and only if $e_0\ge 0$ on all of $\R$. This result was extended to unidirectional flows in \cite{LearShvydkoy2019}. In general, however, a sharp critical threshold condition is not known for $n\ge 2$. The work \cite{HT2016} proves global-in-time existence when $e_0\ge 0$, under an additional smallness assumption on the spectral gap of the symmetric strain tensor of $\bu_0$.

Let us turn now to our class of solutions \eqref{e:uni} in question. First, we note that the unidirectionality propagates in time, by the maximum principle. Second, the definition of $e$ and the equation it satisfies become
\begin{equation}
\label{e:euni}
e(x,t) = \p_{x_1} u(x,t) + \phi*\rho(x,t),
\end{equation}
\begin{equation}
\label{e:econsuni}
\p_t e + \p_{x_1}(eu)=0.
\end{equation}
The continuity equation takes a similar form
\begin{equation}
\label{e:rhoconsuni}
\p_t \rho + \p_{x_1}(\rho u)=0.
\end{equation}
Thus, the unidirectional system \eqref{e:euni}--\eqref{e:rhoconsuni} consists of a family of coupled scalar conservation laws.  What stops the unidirectional dynamics from being completely one-dimensional is the convolution term in \eqref{e:euni}, which depends on values of the density in all  stratification layers.  Wellposedness for the system \eqref{e:euni}--\eqref{e:rhoconsuni} for solutions satisfying $e_0\ge 0$ is presented in \cite{LearShvydkoy2019}.

One explanation for the prominent role of $e_0$ in the 1D wellposedness theory is that the quantity
\begin{equation}
\label{e:eint1D}
\int_\a^\b e_0(\g)\dd\gamma
\end{equation}
controls the long-time separation of the trajectories $X(\a,t)$ and $X(\b,t)$ originating at $\a,\b$.  If the quantity \eqref{e:eint1D} is negative, the trajectories intersect at some finite time (which is at most $(\b-\a)/(\k \int_\a^\b e_0(\g)\dd\g)$). If \eqref{e:eint1D} is nonnegative, then the separation is bounded below by a constant multiple of \eqref{e:eint1D}, plus some time-dependent factor that decays to zero as $t\to +\infty$.  In the special case of a heavy-tailed kernel, the long-time separation is also bounded \textit{above} by a constant multiple of \eqref{e:eint1D}.  Thus in the borderline case where $\int_\a^\b e_0(\g)\dd\gamma=0$, the trajectories $X(\a,t)$ and $X(\b,t)$ approach each other asymptotically as $t\to +\infty$, trapping the mass initially inside $[\a,\b]$ in an interval of vanishingly small length.  Thus, if $e_0\equiv 0$ on an interval of nonzero mass, we observe a {\em mass concentration phenomenon}, which manifests itself in  the emergence of Dirac atoms in the limiting measure $\barm$.  The relationship between $e_0$ and the spread of the limiting trajectories will be central to our analysis below.

The last observation was first quantified in the form above by the second author in the recent paper \cite{L2019CTC}, which analyzed the compatibility of the condition $e_0\ge 0$ with restriction of the domain to the non-vacuum region.  The analysis of \cite{L2019CTC} was partly inspired in turn by the work  \cite{Tan2019WeakSing} by Tan, who showed that, in the case of weakly singular kernels (i.e., $\phi(x)\sim |x|^{-s}$ near $x=0$, with $s\in (0,1)$), an interval of positive mass on which $e_0\equiv 0$ can collapse to a point in finite time (unlike the situation for smooth kernels).  

\begin{REMARK}
It is instructive to consider the Euler alignment system in special case of a constant kernel, $\phi\equiv 1$ (the strength of the interactions being encoded in the constant $\k$, which we allow to take the value zero in this remark). In this case the unidirectional \eqref{e:m} reads
\[
\left\{ 
\begin{split}
& \p_t\rho(x,t) + \p_{x_1}(\rho u)(x,t) = 0, \\
& \p_t u(x,t) + u\p_{x_1}{u}(x,t) = \k M_0 \big(\overline{u}_0 - u(x,t)\big), 
\end{split}\right. \qquad x=(x_1,x_-)\in \R^n,
\]
subject to prescribed initial data ${u}(x,0) = {u}_0(x); \; \rho(x,0) = \rho_0(x)\ge 0$.
Here $\overline{u}_0$ is the average velocity $\displaystyle \overline{u}_0:= \frac{1}{M_0}\int \rho_0 u_0(x)\dd x$.  We distinguish between three different regimes, depending on the initial configuration of $(\rho_0,u_0)$.
In case of \emph{sub-critical} initial data, $e_0= \p_{x_1}u_0 +\k M_0>0$ the system admits global smooth solutions.
In case of \emph{super-critical} initial data, $e_0= \p_{x_1}u_0 +\k M_0<0$, the system admits finite-time breakdown where $\lim_{\substack{\\ x\rightarrow x_c\\ t\uparrow t_c}}\p_{x_1}u(x,t) = -\infty$ \emph{and} (provided the breakdown occurs along a trajectory where the density is initially positive) there is mass concentration at that point $\lim_{\substack{\\ x\rightarrow x_c\\ t\uparrow t_c}}\p_{x_1}\rho(x,t)=\infty$, 
leading to the formation of \emph{delta shocks} \cite{chen2004concentration,shelkovich2009transport,nilsson2011mass}.  Thereafter, one interprets the unidirectional pressureless system in a weak formulation, 
\[
\left\{ 
\begin{split}
& \p_t\rho(x,t) + \p_{x_1}(\rho u)(x,t) = 0, \\
& \p_t (\rho u)(x,t) + \p_{x_1}(\rho u^2)(x,t) = \k M_0 \rho(x,t)\big(\overline{u}_0 - u(x,t)\big), 
\end{split}\right. \qquad x=(x_1,x_-)\in \R^n.
\]
Entropic solutions of pressureless equations with super-critical data, at least in the 1D case, is the subject of extensive studies,  realized in a variety of different approaches and we mention two---variational formulations \cite{tadmor2011variational,liu2016least, HaHuangWang2014wk} and sticky particles \cite{brenier1998sticky,weinan1996generalized,nguyen2008pressureless}. Of these, only \cite{HaHuangWang2014wk} treats the case where $\k>0$.  

The current paper covers the third regime---a borderline case  with critical initial configurations such that $e_0\geq 0$. The zero-level set of $e_0$ then leads to mass concentration at $t=\infty$. The fascinating aspect, to be made precise  in Theorems~\ref{t:Xreg} and  \ref{t:fineprops} below, is the way in which the geometry of the singular part of the limiting mass measure depends on the zero set of $e_0$ and the regularity of $e_0$ near its zero set.  This motivates  further study for the geometry of delta-shocks in super-critical cases $e_0<0$.
\end{REMARK}

\subsubsection{Fast Alignment, Strong Flocking, and the Limiting Trajectory Map}

Let us consider the particle flow map generated by the field $\mathbf{u}$: 
\[
\dot{\bX}(\a,t) = \mathbf{u}(\bX(\a,t), t),
\quad \quad \bX(\a,0) = \a, \quad \quad \a\in \R^n.
\]
Although the flow-map is defined globally in $\R^n$, we often require a compact convex domain of labels $ \a \in \O \ss \R^n$ to study long time behavior. Such domains are always assumed to contain the material flock
\begin{equation}
	\supp \rho_0 \ss \O, \quad \supp \rho(t) \ss \O(t):=\bX(\O,t).
\end{equation}

On any such compact domain, a global solution to \eqref{e:m} experiences flocking, see \cite{TT2014}, 
\begin{align}
\cA(t) =|\mathbf{u}(\cdot, t) - \overline{\mathbf{u}}_0|_{L^\infty(\O)} & \le \cA_0 e^{-\d_\O t}, \label{e:ff} \\
\sup_{t \geq 0} \diam \O(t) & < \infty.
\end{align}
By Galilean invariance of the system \eqref{e:m}, we may assume without loss of generality that $\overline{\bu} = 0$.  
Then the exponential decay of $|\mathbf{u}|$ implies that the particle trajectory map converges uniformly on any compact $\O$, to a continuous function: 
\[
\|\bX(\cdot,t)  -  \barbX\|_{L^\infty(\O)} \leq Ce^{-\d_\O t}, \quad t>0.
\]
As long as $e_0\ge c >0$  on the initial flock $\supp \rho_0$, the alignment estimate \eqref{e:ff} can be lifted to higher regularity classes, effectively showing exponential flattening of velocity field, and convergence of density to a smooth traveling wave, see \cite{ShvydkoyTadmorII}, \cite{LearShvydkoy2019}. 
In fact, those arguments produce local flattening  even without the uniform positive lower bound on $e_0$: assuming $e_0\ge 0$ everywhere and denoting
\[
\cP_\e = \{\a\in \R^n: e_0(\a)\ge \e\}, \quad \cP = \{\a\in \R^n: e_0(\a) > 0\}.
\]
one has
\begin{equation}
\label{e:fastalignment}
\sup_{\a\in \cP_\e \cap \O} |\n u(\bX(\a,t),t)| 
\le Ce^{-\d_{\e,\O} t}.
\end{equation}
Solving the continuity equation
\[
\rho(\bX(\a,t), t) = \rho_0(\a)\exp\left\{ - \int_0^t \p_{x_1} u(\bX(\a,s),s)  \ds \right\},
\]
one can observe a local strong flocking along these same trajectories:
\begin{align*}
\sup_{\a\in \cP_\e \cap \O} |\rho(\bX(\a,t), t) - f(\a))| = Ce^{-\d_{\e,\O} t},
\end{align*}
for some smooth limiting function $f$, defined on $\cP\cap \O$. We can thus focus our attention on the complementary zero-set of $e_0$:
\[
\cZ = \{\a\in \R^n: e_0(\a) = 0\}.
\]
This is where the mass concentration phenomenon we discussed earlier presents itself. We expect that  the density will aggregate on the Lebesgue-negligible set $\barbX(\cZ)$ in the sense of weak-$*$ convergence of measures. To study this concentration phenomenon we introduce the limiting density-measure as follows. Denote  $\dm_t(x):=\rho(x,t)\dx$. According to the continuity equation this is a push-forward of the initial mass by the Lagrangian flow-map
\[
m_t = \bX(\cdot , t)_\sharp m_0,\quad m_t(E) = m_0(\bX^{-1}(E,t)).
\]
The measures $m_t$ converge weakly-$*$ to the push-forward of the limiting flow-map: $\barm = \barbX_\sharp m_0$. Indeed, for any $\eta\in C_c(\R^n)$ we have
\[
\int \eta(x) \dbarm(x) \stackrel{t\to +\infty}{\longleftarrow}\int \eta(x) \dm_t(x) 
= \int \eta(\bX(\a,t)) \dm_0(\a)
\stackrel{t\to +\infty}{\longrightarrow}\int \eta( \barbX(\a))\dm_0(\a).
\]
Our main question, then, concerns the structure of the limiting measure $\barm$.  We expect that $\barm$ has a singular part concentrated on $\barbX(\cZ)$, and that the absolutely continuous part $\barrho(x)\dx$ satisfies $\barrho\circ \barbX = f$, where $f$ is as in the previous paragraph.  Theorem \ref{t:main} below formalizes these expectations.  

The discussion above assumes that $\dm_0(x) = \rho_0(x)\dx$ is absolutely continuous with respect to Lebesgue measure; however, there are almost no additional technicalities necessary to include a possibly singular part, so we will do so below.  Allowing this more general class of initial data has the satisfying consequence that entire evolution $m_t$ and its limit $\overline{m}$ belong to the same class $\cM_+(\R^n)$. Furthermore, this viewpoint is true to the kinetic formulation from which the macroscopic version \eqref{e:m} is derived, and the discrete Cucker-Smale system can be viewed as a particular case of purely atomic solutions $m_t = \sum_i m_i \d_{x_i(t)}$.  We clarify in Section \ref{s:msolns} the wellposedness  theory for solutions starting from such initial data.

\subsection{Statement of Results}

The main technical properties of the limiting flow map $\barbX$ that are needed to analyze the structure of $\barm$ are contained in the following Proposition, which is of independent interest. We include it in this section in order to motivate the statement of Theorem \ref{t:main}. Here and below, we write $\a = (\a_1, \a_-)\in \R\times \R^{n-1}$, and we use $\barX$ to denote the first component of $\barbX$; that is, 
\[
\barbX(\a) = (\barX(\a), \a_-).
\]
We will use the notation $|E|$ to denote the $k$-dimensional Lebesgue measure for a subset $E$ of $\R^k$ (the relevant $k$'s being $k=1, n-1, n$).  

\begin{PROP}
\label{p:barbX}
We have the following estimate in the $x_1$ direction:
\begin{equation}
\label{e:Xbarsep}
\frac{1}{\k M_0 \|\phi\|_{L^\infty}} \int_{\b}^{\g} e_0(\zeta, \a_-)\dd\zeta
\le \barX(\g, \a_-)-\barX(\b, \a_-)
\le \frac{1}{\k M_0 \uphi} \int_{\b}^{\g} e_0(\zeta, \a_-)\dd\zeta.
\end{equation}
The lower bound is valid for any pair $(\b,\a_-)$, $(\g, \a_-)$ of elements of $\R^n$ such that $\b<\g$; the upper bound is valid for such pairs that belong to $\O$.  Consequently, $|\barbX(\cZ)|=0$.
\end{PROP}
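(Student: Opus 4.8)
The plan is to prove the two-sided bound \eqref{e:Xbarsep} and then read off $|\barbX(\cZ)|=0$ from its upper half. Since the velocity is unidirectional the flow fixes the transverse coordinate, so I write $\bX(\a_1,\a_-,t)=(X(\a_1,\a_-,t),\a_-)$, fix $\a_-$, and work with the scalar map $\a_1\mapsto X(\a_1,\a_-,t)$, which is strictly increasing for each $t$ and converges uniformly to $\barX(\cdot,\a_-)$. For $\b<\g$ set $d(t):=X(\g,\a_-,t)-X(\b,\a_-,t)>0$. Differentiating in $t$ and expressing the velocity difference, via \eqref{e:euni}, as the integral of $\p_{x_1}u=e-\k\,\phi*\rho$ over the moving interval $[X(\b,\a_-,t),X(\g,\a_-,t)]$, I split $\dot d$ into an $e$-part and a convolution part. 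For the $e$-part, the conservation law \eqref{e:econsuni} — handled exactly as the continuity equation \eqref{e:rhoconsuni} — says the scalar measure $e(\cdot,\a_-,t)\,\dd x_1$ is transported by $X(\cdot,\a_-,t)$, so this part equals the time-independent quantity $\int_\b^\g e_0(\zeta,\a_-)\,\dd\zeta\ge0$. For the convolution part I use $0\le\k\,\phi*\rho\le\k M_0\|\phi\|_{L^\infty}$ globally and $\k\,\phi*\rho\ge\k M_0\uphi$ on the flock. The formula for $\dot d$ then becomes the differential inequalities $\int_\b^\g e_0-\k M_0\|\phi\|_{L^\infty}d\le\dot d\le\int_\b^\g e_0-\k M_0\uphi\,d$, and a Gr\"onwall comparison with the linear ODEs $\dot y=c_1-c_2y$, letting $t\to+\infty$ (so that $d(t)\to\barX(\g,\a_-)-\barX(\b,\a_-)$), yields \eqref{e:Xbarsep}; the upper inequality uses convexity of $\O$ (so the moving interval stays in $\O(t)=\bX(\O,t)$) and is therefore restricted to pairs in $\O$.

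The step I expect to be the crux is the uniform-in-time lower bound $\phi*\rho(x,t)\ge M_0\uphi$ on the flock, i.e.\ the positivity of $\uphi=\inf\{\phi(x-y):x,y\in\bigcup_{s\ge0}\O(s)\}$. Here heavy-tailedness of $\phi$ is indispensable: it forces unconditional flocking, and together with \eqref{e:ff} (which keeps each trajectory within a bounded displacement of its label) it confines $\bigcup_{s\ge0}\O(s)$ to a fixed bounded set, whence $\uphi>0$. The remaining points — justifying the differentiation of $d(t)$ and the transport identity for $e\,\dd x_1$ — are routine for the regularity class in hand.

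To deduce $|\barbX(\cZ)|=0$ I slice. Since $\barbX$ fixes $\a_-$, the $\a_-$-slice of the image is the image of the $\a_-$-slice, so by Fubini $|\barbX(\cZ)|=\int_{\R^{n-1}}|\barX(\cZ_{\a_-},\a_-)|\,\dd\a_-$ with $\cZ_{\a_-}=\{\zeta:e_0(\zeta,\a_-)=0\}$ and $|\cdot|$ now one-dimensional; exhausting $\R^n$ by compact convex sets $\O_k\uparrow\R^n$ with $\O_1\supseteq\supp\rho_0$, it suffices to show $|\barX(N_k,\a_-)|=0$ for each $k$ and a.e.\ $\a_-$, where $N_k$ is the $\a_-$-slice of $\cZ\cap\O_k$. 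Fix such a slice and set $g:=\barX(\cdot,\a_-)$: it is non-decreasing (a pointwise limit of increasing maps), and the upper half of \eqref{e:Xbarsep} gives $0\le g(\g)-g(\b)\le(\k M_0\uphi)^{-1}\|e_0\|_{L^\infty(\O_k)}(\g-\b)$ on the slice of $\O_k$, so $g$ is Lipschitz there. At a.e.\ $\zeta_0\in N_k$ the point $\zeta_0$ is simultaneously a Lebesgue point of $e_0(\cdot,\a_-)$ and a differentiability point of $g$; since $e_0(\zeta_0,\a_-)=0$ and $e_0\ge0$, the bound forces $0\le g'(\zeta_0)\le(\k M_0\uphi)^{-1}\lim_{h\to0}h^{-1}\int_{\zeta_0}^{\zeta_0+h}e_0(\zeta,\a_-)\,\dd\zeta=0$, so $g'=0$ a.e.\ on $N_k$. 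Lipschitz maps satisfy $|g(N_k)|\le\int_{N_k}|g'|\,\dd\zeta=0$; summing over $k$ and integrating over $\a_-$ completes the proof. (Measurability is not an issue: when $e_0$ is continuous, $\cZ\cap\O_k$ is compact and $\barbX$ continuous, so $\barbX(\cZ)$ is $\sigma$-compact.)
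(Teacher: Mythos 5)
Your proof is correct. For the two-sided bound your route is essentially the paper's: you arrive at exactly the differential inequality \eqref{e:Xsepineq} for $r(t)=X(\g,\a_-,t)-X(\b,\a_-,t)$ and integrate it, letting $t\to+\infty$; the only difference is how the inequality is produced. You transport the conserved density $e\,\dd x_1$ along each $\a_-$-slice using \eqref{e:econsuni}, whereas the paper integrates the Lagrangian system once to obtain the first integral \eqref{e:Xdot} (with $e_0=\p_{\a_1}f_0$) and then differences two labels on the same slice; these are two phrasings of the same conservation structure, and your identification of $\uphi>0$ through flocking of the heavy-tailed kernel, with the upper bound restricted to labels in the convex set $\O$, is exactly how the paper uses \eqref{e:convbd}. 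Where you genuinely diverge is the consequence $|\barbX(\cZ)|=0$: you slice, observe $\barX_{\a_-}$ is Lipschitz on each compact slice, show its a.e.\ derivative vanishes on $\cZ_{\a_-}$ via Lebesgue points, and invoke the Lipschitz image bound $|g(N)|\le\int_N|g'|\,\dd\zeta$, with a compact exhaustion handling the $\O$-dependence of $\uphi$. The paper instead proves the two-sided measure estimate of Proposition \ref{p:Xm} and Corollary \ref{c:XbarZ} by covering an open set with disjoint intervals and summing \eqref{e:Xbarsep}; that argument is more elementary (no a.e.\ differentiability or area-type inequality needed) and, more importantly, its lower bound $|\barbX(E)|\gtrsim\int_E e_0\,\dalpha$ is reused to prove the absolute-continuity half of Theorem \ref{t:main}, so the covering route earns its keep later. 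Your version is sound but yields only the null-set statement; the explicit exhaustion by $\O_k$ is a nice touch that the paper leaves implicit.
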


Our first main Theorem shows that the two sets $\cP$ and $\cZ$ are in natural correspondence with the pieces of the Lebesgue decomposition of $\barm$.  

\begin{THEOREM}[Structure of $\barm$]
\label{t:main}
Let $\bu(x,t)\in C^1([0,\infty), C^k(\R^n))$, $\dm_t(x)\in C_{w^*}([0,\infty);\cM_+(\R^n))$ be a measure-valued solution to \eqref{e:m}, corresponding to the initial velocity $\bu_0 = (u_0, 0)$, and initial density measure $m_0$.  Let $\dm_0(x) = \rho_0(x)\dx + \dnu$ denote the Lebesgue decomposition of $m_0$ with respect to Lebesgue measure, and let $\barm$ denote the limiting measure: $m_t\stackrel{*}{\rightharpoonup} \barm$ in $\cM_+(\R^n)$.  Then the Lebesgue decomposition of $\barm$ with respect to Lebesgue measure is determined from the following:
\begin{align}
\dbarm & = \barrho \dx + \dbmu \\
\barrho \dx = \barbX_\sharp (\rho_0 \one_{\cP} \dx),
& \quad \dbmu = 
\barbX_\sharp (\rho_0 \one_{\cZ} \dx + \dnu).\label{e:barmdecomp1}
\end{align}
Here the singular part is supported on  $\barbX(\cZ \cup \supp \nu)$, while the density $\barrho \in L^1$ is supported on $\barbX(\cP \cap \supp \rho_0))$ and is given by  
\begin{equation}
\label{e:barmdecomp2}
 \barrho\circ \barbX =\frac{\rho_0 }{\p_{\a_1} \barX}  \one_{\cP}.
\end{equation}

Consequently, the measure $\barm$ is absolutely continuous with respect to Lebesgue measure if and only if $m_0(\cZ) = 0$ and $\nu = 0$. Finally, we have $\rho(\bX(\cdot,t), t)\to \barrho\circ \barbX$ uniformly on compact subsets of $\cP$.  
\end{THEOREM}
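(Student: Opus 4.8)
\medskip

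The plan is to leverage the linearity of the push-forward operation together with the two structural facts already in hand: Proposition~\ref{p:barbX}, which gives monotonicity of $\barX$ in $\a_1$ and the null-set property $|\barbX(\cZ)|=0$, and the fast-alignment/strong-flocking estimate \eqref{e:fastalignment}, which controls $\n u$ along trajectories issued from $\cP$. Since the hypothesis of a global-in-time $C^1$ solution forces $e_0\ge 0$ on $\R^n$ (cf. Section~\ref{ss:wpe}), we have the disjoint decomposition $\R^n=\cP\sqcup\cZ$ with $\cP$ open and $\cZ$ closed, hence $m_0=\rho_0\one_{\cP}\dx+\big(\rho_0\one_{\cZ}\dx+\nu\big)$. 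Because $\bX(\cdot,t)_\sharp$ is additive on nonnegative measures and $\bX(\cdot,t)\to\barbX$ uniformly on $\O\supseteq\supp m_0$, the weak-$*$ limit computation already displayed in the text applies verbatim to each summand, giving $\barm=\barbX_\sharp(\rho_0\one_{\cP}\dx)+\barbX_\sharp(\rho_0\one_{\cZ}\dx+\nu)$. It therefore suffices to show that the first summand is absolutely continuous with the density \eqref{e:barmdecomp2} and that the second is singular and supported in $\barbX(\cZ\cup\supp\nu)$; uniqueness of the Lebesgue decomposition then delivers the displayed formulas, and the if-and-only-if statement follows because $\barbX_\sharp$ preserves total mass, so $\barbX_\sharp(\rho_0\one_{\cZ}\dx+\nu)=0$ forces $\rho_0\one_{\cZ}=0$ a.e. and $\nu=0$, equivalently $m_0(\cZ)=0$ and $\nu=0$.

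For the absolutely continuous part I would work along the exhaustion $\cP_\e=\{e_0\ge\e\}$, $\cP=\bigcup_{\e>0}\cP_\e$. Differentiating the Lagrangian ODE in $\a$ and using \eqref{e:fastalignment} --- which makes every first derivative of $u$ decay like $e^{-\d_{\e,\O}s}$ along trajectories issued from $\cP_\e\cap\O$ --- one gets that $\bX(\cdot,t)$ converges on $\cP_\e\cap\O$ in $C^1$, with $\p_{\a_1}\barX=\exp\big(\int_0^\infty\p_{x_1}u(\bX(\cdot,s),s)\ds\big)$ bounded above and below by positive constants. Since $\barbX(\a)=(\barX(\a),\a_-)$, this makes $\barbX$ a bi-Lipschitz homeomorphism of $\cP_\e\cap\O$ onto its image with Jacobian $\p_{\a_1}\barX$; and the lower bound in \eqref{e:Xbarsep} together with openness of $\cP$ shows $\a_1\mapsto\barX(\a_1,\a_-)$ is strictly increasing along $\cP$-slices, so $\barbX$ is in fact injective on all of $\cP$. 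The area formula gives $\barbX_\sharp(\rho_0\one_{\cP_\e}\dx)=h_\e\dx$ with $h_\e=\big(\rho_0/\p_{\a_1}\barX\big)\circ(\barbX|_{\cP_\e\cap\O})^{-1}\one_{\barbX(\cP_\e\cap\O)}$; by injectivity on $\cP$ the $h_\e$ increase as $\e\downarrow0$ to $\barrho:=\big(\rho_0/\p_{\a_1}\barX\big)\circ(\barbX|_{\cP\cap\O})^{-1}\one_{\barbX(\cP\cap\O)}$, and monotone convergence gives $\int\barrho\dx=\int\rho_0\one_{\cP}\dx\le M_0$. Hence $\barrho\in L^1$, $\barbX_\sharp(\rho_0\one_{\cP}\dx)=\barrho\dx$, and $\barrho\circ\barbX=(\rho_0/\p_{\a_1}\barX)\one_{\cP}$, which is \eqref{e:barmdecomp2}; the support claim is then immediate from the vanishing of $\rho_0$ off $\supp\rho_0$.

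For the singular part, $\barbX_\sharp(\rho_0\one_{\cZ}\dx)$ is carried by $\barbX(\cZ)$, which is Lebesgue-null by Proposition~\ref{p:barbX}; and if $N\subseteq\O$ is a Lebesgue-null set carrying $\nu$, then $\barbX_\sharp\nu$ is carried by $\barbX(N)\subseteq\barbX(N\cap\cZ)\cup\bigcup_k\barbX(N\cap\cP_{1/k}\cap\O)$, each piece of which is null --- the first by Proposition~\ref{p:barbX}, the rest by the bi-Lipschitz property above. So the second summand is carried by a Lebesgue-null set, hence singular, and is supported in $\barbX(\cZ\cup\supp\nu)$. It remains to prove the concluding uniform-convergence statement, which is the same computation that produced $\p_{\a_1}\barX$: for $\a\in\cP_\e\cap\O$ the continuity equation gives $\rho(\bX(\a,t),t)=\rho_0(\a)\exp\big(-\int_0^t\p_{x_1}u(\bX(\a,s),s)\ds\big)$, and \eqref{e:fastalignment} makes the exponent converge uniformly on $\cP_\e\cap\O$, whence $\rho(\bX(\cdot,t),t)\to\rho_0/\p_{\a_1}\barX=\barrho\circ\barbX$ uniformly there; a general compact $K\subseteq\cP$ lies in some $\cP_{\e_0}\cap\O'$ with $\O'$ compact convex (enlarging the label domain, on which all the estimates persist), which gives the claim. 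I expect the main obstacle to lie in the second paragraph: assembling the change of variables consistently across the exhaustion $\cP_\e\uparrow\cP$, which requires upgrading injectivity from each $\cP_\e$ to all of $\cP$ via the strict monotonicity encoded in \eqref{e:Xbarsep} and securing uniform two-sided bounds for $\p_{\a_1}\barX$ from \eqref{e:fastalignment}; the remaining measure-theoretic steps are bookkeeping.
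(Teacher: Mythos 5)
Your proposal is correct in substance and establishes every assertion of the theorem, but it reaches the Lebesgue decomposition by a more constructive route than the paper. For the absolute continuity of $\barbX_\sharp(\rho_0\one_{\cP}\dx)$ the paper argues purely measure-theoretically: if $E$ is Lebesgue null, then $\barbX(\barbX^{-1}(E)\cap\cP)\subset E$ is null, so the lower bound of Corollary \ref{c:XbarZ} forces $\int_{\barbX^{-1}(E)\cap\cP}e_0\dalpha=0$, and since $e_0>0$ on $\cP$ the preimage is null; the density formula \eqref{e:barmdecomp2} is then a one-line change of variables, and the uniform convergence $\rho(\bX(\cdot,t),t)\to\barrho\circ\barbX$ is read off from Proposition \ref{p:gradbarX} by essentially the same computation you perform with \eqref{e:fastalignment}. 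You instead build the density explicitly via the area formula on the exhaustion $\cP_\e\uparrow\cP$ and pass to the limit by monotone convergence, which delivers absolute continuity and \eqref{e:barmdecomp2} in one stroke, at the cost of extra injectivity and regularity bookkeeping, and with the small bonus of the bound $\int\barrho\,\dx\le M_0$. Your treatment of the singular part is if anything more careful than the paper's: you push forward a Lebesgue-null carrier of $\nu$ rather than $\supp\nu$, which is the right thing to do since $\supp\nu$ may have positive Lebesgue measure.

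One claim does need repair: $\barbX$ is in general \emph{not} bi-Lipschitz on $\cP_\e\cap\O$. The lower bound in \eqref{e:Xbarsep} controls $\barX(\g,\a_-)-\barX(\b,\a_-)$ only by $\int_\b^\g e_0(\zeta,\a_-)\dd\zeta$, and if the segment joining two points of $\cP_\e$ on the same slice crosses a long stretch of $\cZ$ (or a region where $e_0$ is tiny), this integral---hence the image distance---can be arbitrarily small compared with $\g-\b$, so no uniform lower Lipschitz constant exists. Fortunately you never actually need the lower bound: injectivity of $\barbX$ on $\cP$ (which you prove from strict monotonicity of the slice maps), Lipschitz continuity of $\barbX$ on $\O$ coming from \eqref{e:final(X,V)}, the $C^1$ regularity on $\cP$ from Proposition \ref{p:gradbarX}, and the pointwise Jacobian bounds \eqref{e:pa1Xbarbds} (which give $\p_{\a_1}\barX\ge \e/(\k M_0\|\phi\|_{L^\infty})$ on $\cP_\e$) suffice both for the area formula on $\cP_\e\cap\O$ and for mapping null sets to null sets (alternatively, the upper bound in Corollary \ref{c:XbarZ} gives the latter directly). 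So the argument goes through after replacing ``bi-Lipschitz homeomorphism'' by ``injective Lipschitz map with Jacobian bounded below,'' and with that substitution your proof is a valid, mildly more hands-on alternative to the paper's.
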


\begin{REMARK}
If $\supp \rho_0 \ss \cP$, then we simply have the global convergence $\| \rho(t) - \bar{\rho}\|_{L^\infty} \to 0$, which recovers the result of \cite{LearShvydkoy2019, ShvydkoyTadmorII}.
\end{REMARK}

\begin{REMARK}[Comparison with \cite{LS2019}]
Combining \eqref{e:barmdecomp2} with (a limiting version of) \eqref{e:Xbarsep} yields the two-sided bound
\begin{equation}
\label{e:rhobarbd}
\k M_0 \uphi \cdot \frac{\rho_0}{e_0}(\a) \le \barrho\circ \barbX(\a) \le \k M_0 \|\phi\|_{L^\infty} \cdot \frac{\rho_0}{e_0}(\a),
\quad \quad \a\in \cP.
\end{equation}		
	
This bound offers a supplement to the conclusions obtained in \cite{LS2019} by the second and third authors. This work treated---in the 1D periodic setting without vacuum---the deviation of $\barrho$ from a constant (in the case where $\cZ=\emptyset$).  The result obtained there, written in the notation of our current context, reads
\[
\limsup_{t\to +\infty} \left\| \rho(t) - \frac{1}{|\T|} \int_\T \rho_0 \right\|_{L^1} \lesssim \left\| \frac{e_0}{\rho_0} - \int \phi \right\|_{L^\infty}.
\]
The significance of the number $\int \phi$ is that periodicity guarantees it to be the average value of $e_0/\rho_0$ on $\T$.  So the result of \cite{LS2019} says that the long-time deviation of $\rho$ from its average value (measured in $L^1$) is controlled by the deviation of the initial quantity $e_0/\rho_0$ from its average value (measured in $L^\infty$).  In the case of global kernels, $\supp \phi = \T$, the bound \eqref{e:rhobarbd} provides an improvement, since it is a two-sided \textit{uniform} bound if $e_0/\rho_0$ is close to its average value on $\T$.  However, the analysis of \cite{LS2019} is valid even for \textit{local} kernels, where $\supp \phi$ is much smaller than $\T$ (in which case the left side of \eqref{e:rhobarbd} vanishes), and for a class of `topological' kernels $\phi$ introduced in \cite{STtopo}, where the communication protocol $\phi$ itself depends on the density (and is in general nonintegrable).  Therefore, while the bound \eqref{e:rhobarbd} provides a nice supplement to the results of \cite{LS2019}, the present work is otherwise essentially disjoint from \cite{LS2019}.
\end{REMARK}

\begin{REMARK}
We return briefly to the consideration of the case $\phi \equiv 1$ for comparison.  Let us drop the assumption of unidirectionality for a moment and consider the flow map $(\bX, \bV)$ associated to a solution $(\bu, \rho)$ that is known to exist globally in time.  In this case one has (assuming momentum zero, for simplicity)
\[
\dot{\bV} = -\k M_0 \bV,
\qquad \dot{\bX} = \bV.
\]
whence
\[
\barbX(\a) = \a + \frac{\bu_0(\a)}{\k M_0},
\]
upon solving the particle trajectory equations and taking $t\to +\infty$.  Notice that $\n \barbX = \id + \frac{1}{\k M_0} \n \bu_0$, or in the unidirectional case, $\p_{1} \barX = \frac{1}{\k M_0} (\k M_0 + \p_{1} u_0) = \frac{e_0}{\k M_0}$, in agreement with \eqref{e:Xbarsep}.  However, it should be noted that a different argument is needed in order to obtain \eqref{e:Xbarsep} for the case of general $\phi$, where the equation is genuinely nonlocal (unlike for constant $\phi$).  In fact, the argument leading to \eqref{e:Xbarsep} does not extend to the case of non-unidirectional data.  
\end{REMARK}

Later in Section~\ref{s:stab}, we show that the assignment of limiting measure $m_0 \to \barm$ is stable in the Kantorovich-Rubinstein metric. Our argument is a borderline $L^\infty$-version of the $\ell_{p,q}$-stability estimates presented in \cite{Ha-stability}.

Our second Theorem implies that if $\cZ$ is a `nice' set, then the set on which $\dbmu_\cZ :=\barbX_\sharp( \rho_0 \one_{\cZ} \dx)$ concentrates is a union of $C^1$ hypersurfaces.  This relies on some additional regularity of $\barbX$ inside $\cZ$:

\begin{PROP}
\label{p:Xreg}
The map $\barbX$ is continuously differentiable on the complement of $\p \cZ$.	
\end{PROP}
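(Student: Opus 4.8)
The plan is to decompose $\R^n\setminus\p\cZ$ into the two disjoint open sets $\cP$ and $\mathrm{int}(\cZ)$ — here $\cZ=\{e_0=0\}$ is closed (as $e_0$ is continuous) and, since $e_0\ge 0$, $\cP=\cZ^c$, so indeed $\R^n\setminus\p\cZ=\cP\cup\mathrm{int}(\cZ)$ — and to prove $\barbX\in C^1$ on each piece. By unidirectionality $\bX(\a,t)=(X(\a,t),\a_-)$, so $\n\barbX$ has block form with lower-right block $I_{n-1}$, and it suffices to show that $\p_{\a_1}X(\cdot,t)$ and $\p_{\a_j}X(\cdot,t)$, $j\ge2$, converge uniformly on compact subsets of $\cP$ and of $\mathrm{int}(\cZ)$ as $t\to\infty$; since $\bX(\cdot,t)\to\barbX$ already converges uniformly on compacts, the standard $C^1$-limit criterion (via the fundamental theorem of calculus) then identifies the limits with the entries of $\n\barbX$ and yields continuity. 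On $\cP$ this follows from the fast-alignment machinery: solving the variational equations along a trajectory gives $\p_{\a_1}X(\a,t)=\exp\!\big(\int_0^t\p_{x_1}u(\bX(\a,s),s)\,\dd s\big)$ and $\p_{\a_j}X(\a,t)=\int_0^t\exp\!\big(\int_s^t\p_{x_1}u(\bX(\a,\tau),\tau)\,\dd\tau\big)\p_{x_j}u(\bX(\a,s),s)\,\dd s$, and on $\cP_\e\cap\O$ the estimate \eqref{e:fastalignment} forces the integrands to decay like $e^{-\d_{\e,\O}s}$ while the exponential weights stay bounded, so both quantities converge exponentially fast, uniformly on $\cP_\e\cap\O$, to continuous limits; as every compact $K\subset\cP$ lies in some $\cP_\e\cap\O$ (for suitable large $\O$), this gives $\barbX\in C^1(\cP)$.

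For $\mathrm{int}(\cZ)$, fix a large compact convex $\O\supset\supp\rho_0$ absorbing the compact set of interest (legitimate since $C^1$ is local). The key observation is that $e$ vanishes identically along trajectories issuing from $\cZ$: the conservation law \eqref{e:econsuni} makes $e(\bX(\a,t),t)\,\p_{\a_1}X(\a,t)$ independent of $t$, hence equal to $e_0(\a)=0$ for $\a\in\cZ$, and since $\p_{\a_1}X>0$ we conclude $e(\bX(\a,t),t)\equiv0$, i.e. $\p_{x_1}u(\bX(\a,t),t)=-\k(\phi*\rho)(\bX(\a,t),t)$. Because flocking confines the flock to a set of $t$-uniformly bounded diameter, one has $\k M_0\uphi\le\k(\phi*\rho)(\cdot,t)\le\k M_0\|\phi\|_{L^\infty}$ on $\O(t)$; hence $0<\p_{\a_1}X(\a,t)=\exp(\int_0^t\p_{x_1}u(\bX(\a,s),s)\,\dd s)\le e^{-\k M_0\uphi\,t}$, so $\p_{\a_1}X(\cdot,t)\to0$ uniformly on $\cZ\cap\O$ (recovering $\p_{\a_1}\barX=0$ on $\mathrm{int}(\cZ)$, consistently with \eqref{e:Xbarsep}). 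For the transversal derivatives I would differentiate the momentum equation in $x_j$ and evaluate along $\bX(\a,\cdot)$: on $\cZ$ the term $-(\p_{x_j}u)(\p_{x_1}u+\k\phi*\rho)=-(\p_{x_j}u)\,e$ cancels, leaving
\[
\frac{\dd}{\dd t}\big[\p_{x_j}u(\bX(\a,t),t)\big]=\k\int\p_{x_j}\phi(\bX(\a,t)-y)\big(u(y,t)-u(\bX(\a,t),t)\big)\rho(y,t)\,\dd y ,
\]
whose right-hand side is $O(\|\n\phi\|_{L^\infty}M_0\cA_0 e^{-\d_\O t})$ by \eqref{e:ff}; so $b_j(t):=\p_{x_j}u(\bX(\a,t),t)$ converges, exponentially and uniformly on $\cZ\cap\O$, to a continuous limit $L_j(\a)$. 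Separately, $a(t):=\p_{x_1}u(\bX(\a,t),t)=-\k(\phi*\rho)(\bX(\a,t),t)$ converges exponentially to $a_\infty(\a):=-\k(\phi*\barm)(\barbX(\a))\le-\k M_0\uphi<0$, using $\|\bX(\cdot,t)-\barbX\|_{L^\infty(\O)}\le Ce^{-\d_\O t}$, $m_t\stackrel{*}{\rightharpoonup}\barm$, and the Kantorovich--Rubinstein bound $W_1(m_t,\barm)\le M_0\|\bX(\cdot,t)-\barbX\|_{L^\infty(\O)}$. Finally $p(t):=\p_{\a_j}X(\a,t)$ solves the scalar linear ODE $\dot p=a(t)p+b_j(t)$, $p(\a,0)=0$; since the homogeneous part contracts at rate $\ge\k M_0\uphi$ while $b_j(t)-L_j(\a)$ and $a(t)-a_\infty(\a)$ are exponentially small, a Gr\"onwall argument gives $p(t)\to-L_j(\a)/a_\infty(\a)$ exponentially, uniformly on $\cZ\cap\O$, with continuous limit. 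Together with the $\p_{\a_1}X$ estimate this yields uniform convergence of $\n\bX(\cdot,t)$ on compact subsets of $\mathrm{int}(\cZ)$, whence $\barbX\in C^1(\mathrm{int}(\cZ))$.

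The hard part is precisely the transversal derivative $\p_{\a_j}X$ inside $\cZ$: no decay of $\n u$ is available along these trajectories (the mass concentrates there, so the $\cP_\e$-type estimates do not apply), and the bare variation-of-constants formula only yields boundedness, not convergence. What rescues the argument is the identity $\p_{x_1}u+\k\phi*\rho=e=0$ on $\cZ$, which simultaneously confines the coefficient $a(t)$ to $[-\k M_0\|\phi\|_{L^\infty},-\k M_0\uphi]$ — and, once $m_t\stackrel{*}{\rightharpoonup}\barm$ is invoked, forces it to converge — and cancels the destabilizing $(\p_{x_j}u)(\p_{x_1}u)$ term in the evolution of $\p_{x_j}u$, so that its only remaining forcing is the exponentially small alignment defect. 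The remaining points — uniformity of all constants over $\a$ in the relevant compact set, and the legitimacy of differentiating the momentum equation under the stated regularity of $\bu$ — are routine.
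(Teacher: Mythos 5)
Your proposal is correct, and its overall architecture coincides with the paper's: split $\R^n\setminus\p\cZ$ into $\cP$ and the interior of $\cZ$; on $\cP$ use the fast-alignment bound \eqref{e:fastalignment} in the variational equations \eqref{e:p1ptX}--\eqref{e:pjptX}; on $\cZ$ exploit that $e$ vanishes along trajectories (your derivation via the Lagrangian invariant $e(\bX(\a,t),t)\,\p_{\a_1}X(\a,t)=e_0(\a)$ from \eqref{e:econsuni} is exactly the justification of \eqref{e:e0}), so that $\p_{x_1}u(\bX(\a,t),t)=-\k\,\phi*m_t(\bX(\a,t))\le -\k M_0\uphi$, and so that the evolution of $\p_{x_j}u$ along these trajectories loses its $-e\,\p_{x_j}u$ term and is driven only by the exponentially small alignment integral, which is precisely the paper's estimate \eqref{e:pxjuZ}. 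The one place where you genuinely diverge is the last step for the transversal derivative inside $\cZ$: the paper converts the chain rule into the algebraic identity $\p_{\a_j}X=\bigl(\p_{x_j}u(\bX(\a,t),t)-\p_{\a_j}V(\a,t)\bigr)/\bigl(\k\,\phi*m_t(\bX(\a,t))\bigr)$ and then invokes the deformation-tensor decay $\|\n\bV\|_{L^\infty(\O)}\to 0$ from \eqref{e:final(X,V)}, whereas you treat $p=\p_{\a_j}X$ as the solution of the scalar linear ODE $\dot p=a(t)p+b_j(t)$ with a uniformly negative, exponentially converging coefficient and exponentially converging forcing, and extract the limit $-L_j/a_\infty$ by variation of constants. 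Your route avoids the flocking estimate \eqref{e:final(X,V)} altogether (and in fact recovers $\p_{\a_j}V\to 0$ on $\cZ$ as a by-product), at the cost of an extra ODE-asymptotics argument and of proving exponential convergence of $a(t)$, which you correctly obtain from the Lipschitz continuity of $\phi$ and the uniform convergence $\bX\to\barbX$; the paper's version is marginally shorter only because the deformation bounds were already established in Section \ref{ss:flocking}. Both arguments yield the required uniform convergence of $\n\bX(\cdot,t)$ on compact subsets of each piece, hence the $C^1$ regularity of $\barbX$ off $\p\cZ$.
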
  

\begin{THEOREM}
\label{t:Xreg}
Assume $U$ is an open subset of $\cZ$, having the following properties:
\begin{itemize}
\item $U$ is `$x_1$-convex', i.e., if $(\b,\a_-), (\g,\a_-)\in U$, then $((1-\l)\b + \l \g, \a_-)\in U$ for all $\l\in [0,1]$.
\item $U$ contains the graph of a $C^1$ function $f:U_-\to \R$, where $U_-:=\{\a_-\in \R^{n-1}:(\a_1, \a_-)\in U \text{ for some } \a_1\in \R\}$ denotes the projection of $U$ onto $\R^{n-1}$.  That is, assume 
\[
\Gamma(f) = \{(f(\a_-),\a_-): \a_-\in U_-\} \subset U.
\]  
\end{itemize}
Then $\barbX(U)$ is the graph of a $C^1$ function:
\[
\barbX(U) = \barbX(\Gamma(f)) = \{(\barX(f(\a_-), \a_-), \a_-):\a_-\in U_-\}.
\]
In particular, if $\overline{U}$ is all of $\cZ$, then 
\begin{equation}
\dbmu_\cZ(x_1, \a_-) = c(\a_-)\d_{f(\a_-)}(x_1) \dalpha_-,
\end{equation}
where $\dbmu_\cZ$ denotes the pushforward measure $\barbX_\sharp( \rho_0 \one_{\cZ} \dx)$, and 
\[
c(\a_-):=\int_{\b(\a_-)}^{\g(\a_-)} \rho_0(\a_1, \a_-)\dalpha_1,
\quad \quad 
[\b(\a_-),\g(\a_-)] = \{\alpha_1\in \R: (\a_1, \a_-)\in \cZ\}.
\]
The functions $c(\a_-)$ are $C^1$ if $\p U$ is $C^1$.
\end{THEOREM}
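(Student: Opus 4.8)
The plan is to deduce the whole statement from the pinching in \eqref{e:Xbarsep}, from Proposition~\ref{p:Xreg}, and from Fubini's theorem; throughout we use that $e_0\ge 0$ in the regime under consideration, so that $e_0$ vanishes \emph{identically} on any $x_1$-segment contained in $\cZ$. The crucial first observation is that if a segment $[\b,\g]\times\{\a_-\}$ lies in $\cZ$, then $\int_\b^\g e_0(\zeta,\a_-)\dd\zeta=0$, so \emph{both} sides of \eqref{e:Xbarsep} collapse and $\barX(\b,\a_-)=\barX(\g,\a_-)$. The upper bound in \eqref{e:Xbarsep} is a priori valid only for pairs in $\O$; but $\O$ may be taken to be any compact convex set containing $\supp\rho_0$, so by enlarging it one gets that $\a_1\mapsto\barX(\a_1,\a_-)$ is constant on every compact $x_1$-subinterval of $\cZ$, hence (by continuity of $\barX$) on the whole slice $\cZ_{\a_-}$.

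Granting this, I would next prove the graph identity. Since $U$ is $x_1$-convex, each fibre $U_{\a_-}=\{\a_1:(\a_1,\a_-)\in U\}$ is an interval contained in $\cZ$, so $\barX(\cdot,\a_-)$ is constant there; because $\Gamma(f)\subset U$ we have $f(\a_-)\in U_{\a_-}$, so that constant equals $F(\a_-):=\barX(f(\a_-),\a_-)$. Hence $\barbX(U)=\{(F(\a_-),\a_-):\a_-\in U_-\}=\barbX(\Gamma(f))$, the graph of $F$. To see that $F$ is $C^1$, note that $U$ is open and $U\subset\cZ$, so $U\subset\mathrm{int}\,\cZ$ and $U\cap\p\cZ=\emptyset$; Proposition~\ref{p:Xreg} then gives $\barbX$, in particular $\barX$, of class $C^1$ on $U$, and since $f\in C^1(U_-)$ with $\Gamma(f)\subset U$ the chain rule yields $F=\barX\circ(f,\mathrm{id})\in C^1(U_-)$.

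For the limiting measure, assume $\overline U=\cZ$. Then $x_1$-convexity of $U$ makes each slice $\cZ_{\a_-}$ the interval $[\b(\a_-),\g(\a_-)]$ --- the closure of $U_{\a_-}$ --- on which $\barX(\cdot,\a_-)\equiv F(\a_-)$ by the first step. For $\eta\in C_c(\R^n)$, Fubini's theorem gives
\[
\int\eta\,\dbmu_\cZ=\int_\cZ\eta(\barX(\a),\a_-)\,\rho_0(\a)\dd\a=\int_{\R^{n-1}}\eta(F(\a_-),\a_-)\left(\int_{\b(\a_-)}^{\g(\a_-)}\rho_0(\a_1,\a_-)\dd\a_1\right)\dd\a_-=\int_{\R^{n-1}}\eta(F(\a_-),\a_-)\,c(\a_-)\,\dd\a_-,
\]
which is exactly the asserted disintegration $\dbmu_\cZ=c(\a_-)\,\d_{F(\a_-)}(x_1)\,\dalpha_-$, the atom sitting at $F(\a_-)=\barX(f(\a_-),\a_-)$ (which is how the statement encodes it via $\barbX(\Gamma(f))$). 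Finally, when $\p U$ is $C^1$, the endpoints $\b(\a_-),\g(\a_-)$ are themselves $C^1$ where finite, and $c\in C^1$ follows by differentiating under the integral sign.

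I expect the only real difficulties to be bookkeeping, since the substantive analytic content --- the two-sided separation bound \eqref{e:Xbarsep} and the $C^1$-regularity of $\barbX$ off $\p\cZ$ --- is already available. The two points needing care are: (i) justifying the use of the upper bound in \eqref{e:Xbarsep} on arbitrary segments of $\cZ$ (not just those inside a fixed $\O$), handled by the enlarging-$\O$ remark above; and (ii) the last assertion, where near $\p U$ one must track the regularity of the fibre-endpoints $\b,\g$ through possible vertical tangencies of $\p U$ and through points where the fibre degenerates to a single point (there $c=0$, and a separate continuity check closes the gap), and one should also note that the boundary part $\cZ\setminus U$ contributes no additional atom --- automatic when $\p U$ is $C^1$, and in any event avoided above by integrating over the full slices $\cZ_{\a_-}$ rather than over $U_{\a_-}$.
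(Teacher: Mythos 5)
Your proposal is correct and follows essentially the same route as the paper's own proof: constancy of $\barX$ on $x_1$-slices of $\cZ$ via the two-sided bound \eqref{e:Xbarsep}, $C^1$-regularity of the graph function via Proposition~\ref{p:Xreg} and the chain rule, and a slicing/Fubini argument identifying $\dbmu_\cZ$ as $c(\a_-)$ times a point mass on each slice. Your extra care in placing the atom at $\barX(f(\a_-),\a_-)$ (rather than literally at $f(\a_-)$, as the statement's notation suggests) and in justifying the upper bound of \eqref{e:Xbarsep} on all of $\cZ$ are sensible refinements of the same argument, not a different approach.
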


Theorem \ref{t:Xreg} says that the solution experiences \textit{aggregation} along horizontal slices in $\cZ$, in a regular way with respect to the other directions.  This is instructive to demonstrate graphically; see the figures below.  Note that while the Corollary is stated for the case of a single set $U$ satisfying the hypotheses, one can combine multiple open sets satisfying the two bullet points to obtain different `sets of aggregation' consisting smooth hypersurfaces, as shown in these figures. In each of the two-dimensional examples below, observe that whenever two curves in the image meet at a point, they must be tangent at that point.  This is because both of the curves must be $C^1$ and cannot cross each other.

\begin{figure}[ht]
	\begin{subfigure}{.43\textwidth}
		\centering
		\includegraphics[width=.8\linewidth]{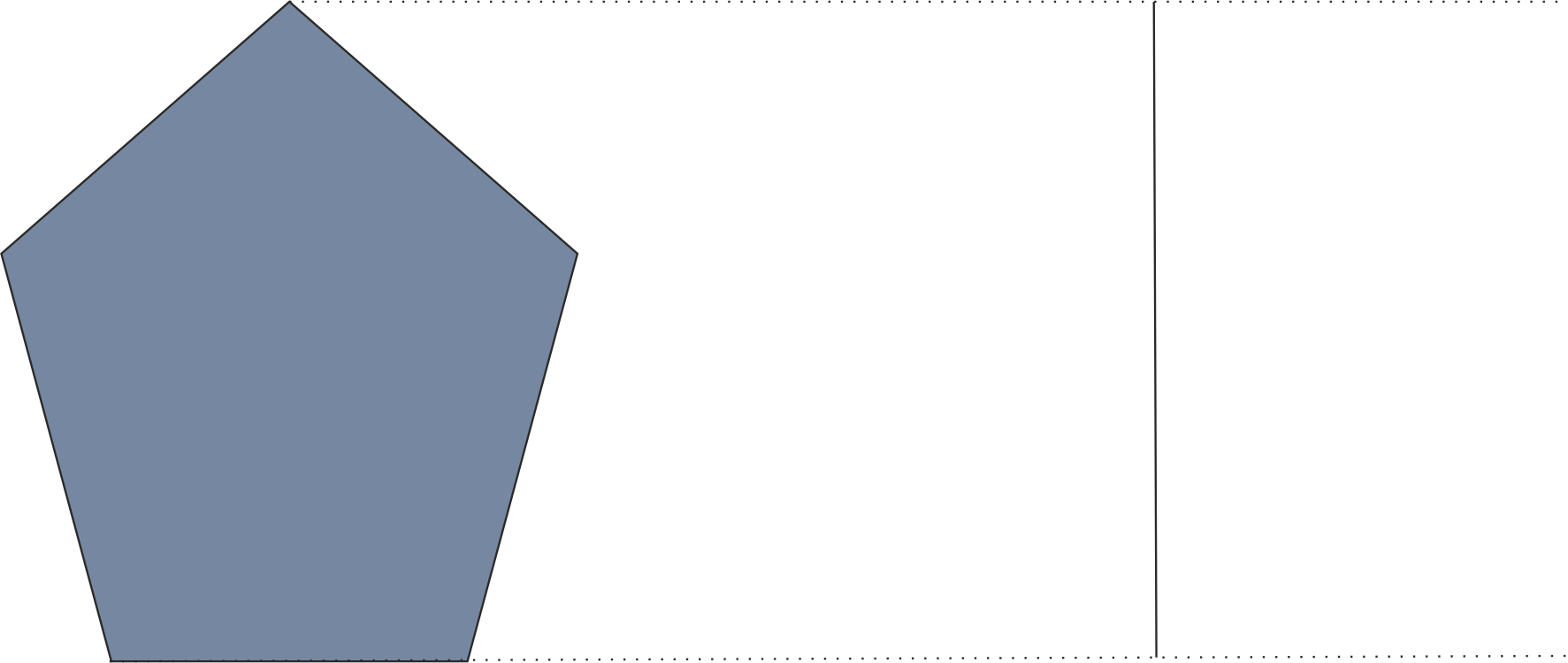}  
		\caption{A domain $\cZ$ which is convex in the $x_1$-direction maps to a smooth curve (depicted here as a line segment for simplicity).}
		\label{fig:pentagon}
	\end{subfigure}
\hfill
	\begin{subfigure}{.55\textwidth}
		\centering
\vspace{3.5 mm}
		\includegraphics[width=.8\linewidth]{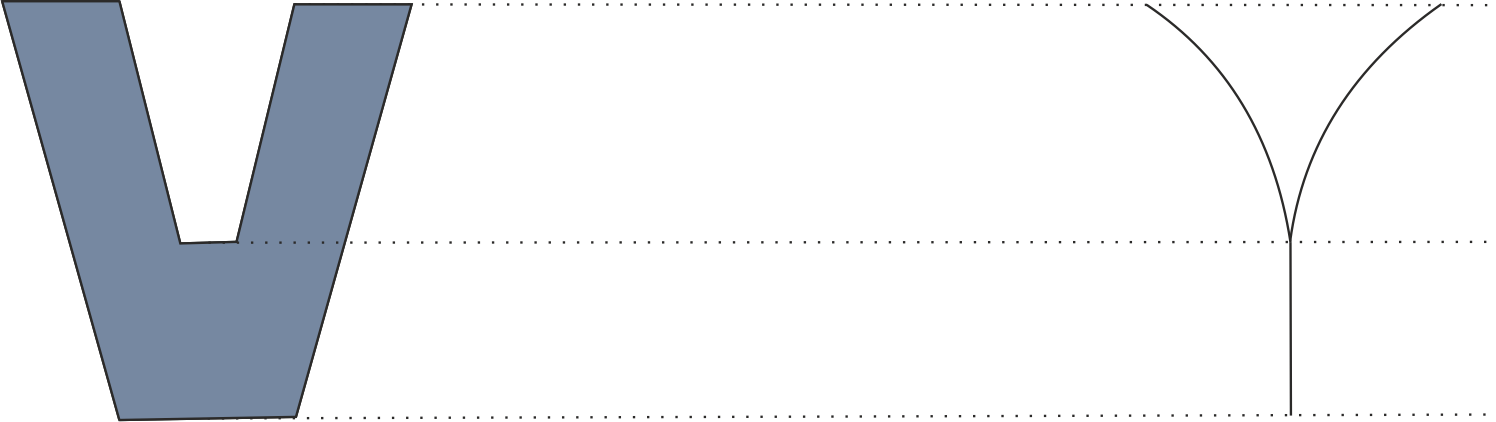}  
		\caption{This set $\cZ$ is not convex in the $x_1$-direction, but it can be decomposed into two (maximal) such domains, whose overlap determines the corresponding overlap of the $\barbX(\cZ)$ curves.}
		\label{fig:V}
	\end{subfigure}
\begin{subfigure}{.4\textwidth}
	\centering
	\includegraphics[width=.8\linewidth]{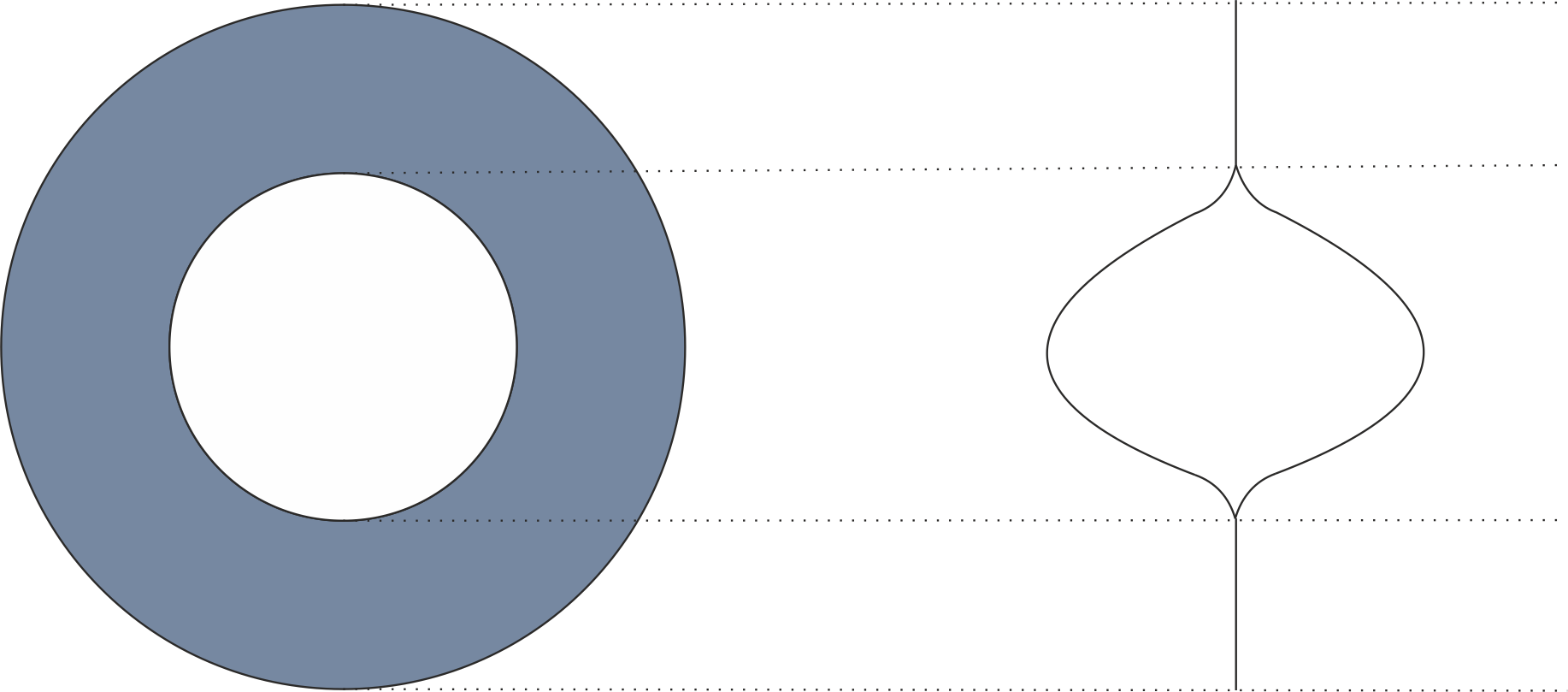}  
	\caption{As in (B), one decomposes the annulus $\cZ$ into maximal $x_1$-convex components in order to determine the structure of $\barbX(\cZ)$.}
	\label{fig:annulus}
\end{subfigure}
\begin{subfigure}{.58\textwidth}
	\centering
	\includegraphics[width=.8\linewidth]{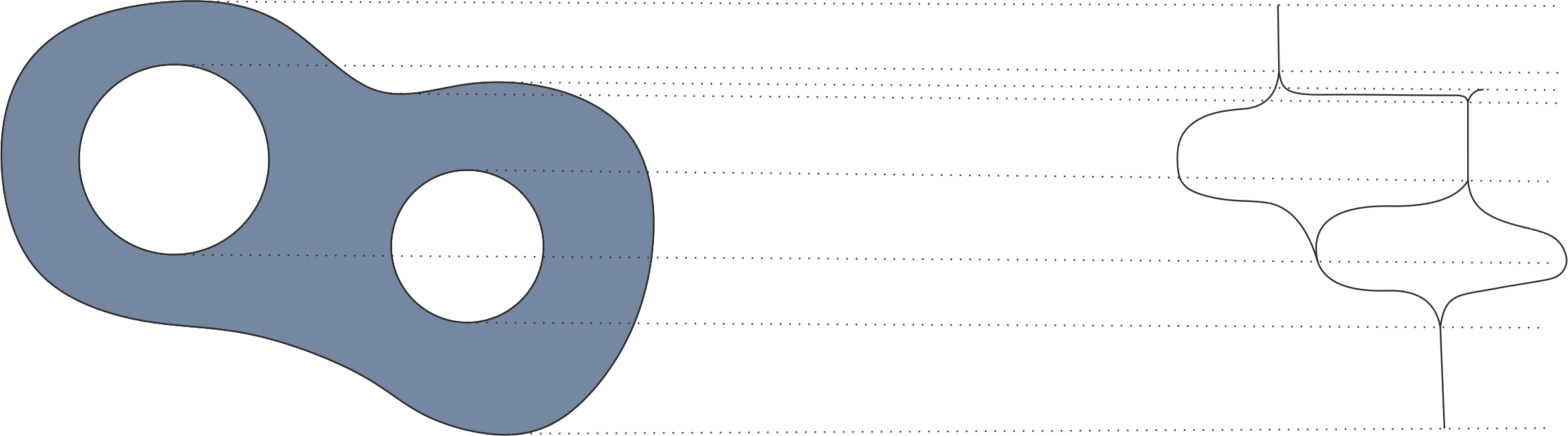}  
	\caption{More complicated sets $\barbX(\cZ)$ can be produced by, for example, increasing the genus of $\cZ$.}
	\label{fig:genus2}
\end{subfigure}
	\caption{A heuristic illustration of the effect of $\barbX$ on $\cZ$. In (B)--(D), the curves comprising $\barbX(\cZ)$ are tangent at each bifurcation point. }
	\label{fig:fig}
\end{figure}

A natural question is to look further into finer properties of the null set $\barbX(\cZ)$, and ask how small or large this set can be in terms of fractal  dimension. We answer this question in 1D (to simplify technical details).  The main result states that the size of $\barX(\cZ)$ is directly tied to the regularity of $e_0$.

\begin{THEOREM}
\label{t:fineprops}
If $n = 1$ and $e_0\in C^k(\R)$, then the upper box-counting dimension of $\barX(\cZ)$ satisfies
\begin{equation}
\label{e:boxdime0}
\overline{\dim}_{\mathrm{box}}(\barX(\cZ)) 
\le \frac{\overline{\dim}_{\mathrm{box}}(\cZ)}{k+1}. 
\end{equation}		
In particular, if $e_0\in C^\infty(\R)$, then the Hausdorff and box-counting dimensions of $\barX(\cZ)$ are both zero. 
\[
\dim_{\mathrm{box}}(\barX(\cZ))  = \dim_\cH(\barX(\cZ)) =  0.
\]
The bound \eqref{e:boxdime0} is sharp in the following sense: For any $k\in \N\cup \{0\}$ and any $\e>0$, there exists initial data such that $e_0\in C^k(\R)$ and $\cZ$ has positive Lebesgue measure, but 
\[
\dim_{\mathrm{box}}(\barX(\cZ)) = \dim_\cH(\barX(\cZ))>\frac{1}{k+1} - \e.
\]
\end{THEOREM}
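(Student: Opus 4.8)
Work in $n=1$ and assume, as one may, that $\cZ$ lies in a fixed bounded interval. The plan for the upper bound \eqref{e:boxdime0} is to read the box dimension of $\barX(\cZ)$ off the lengths of its complementary intervals, controlling those through \eqref{e:Xbarsep}. Since $\cZ=\{e_0=0\}=\{e_0\le0\}$, on each bounded component $G_j=(\a_j,\b_j)$ of $\R\setminus\cZ$ one has $e_0>0$; by \eqref{e:Xbarsep} the non-decreasing map $\barX$ is then strictly increasing on $\overline{G_j}$, and $(\barX(\a_j),\barX(\b_j))$ is exactly one complementary interval of the compact set $\barX(\cZ)$, with
\[
\tfrac1{\k M_0\|\phi\|_{L^\infty}}\,s_j \;\le\; \barX(\b_j)-\barX(\a_j)\;\le\;\tfrac1{\k M_0\uphi}\,s_j ,\qquad s_j:=\int_{G_j}e_0,
\]
and distinct $G_j$ giving distinct intervals. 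By Proposition~\ref{p:barbX}, $|\barX(\cZ)|=0$, so these intervals fill the convex hull of $\barX(\cZ)$. A routine covering estimate at scale $\eta$ — discard the finitely many complementary intervals of length $\ge\eta$ and cover what remains — then bounds $\overline{\dim}_{\mathrm{box}}(\barX(\cZ))$ in terms of the decay of the decreasing rearrangement $(s_n^{*})$ of $(s_j)$; concretely, it suffices to show that for every $\e>0$,
\begin{equation}\label{e:ft-count}
\#\{\,j:\ s_j>t\,\}\ \le\ C_\e\, t^{-(\overline{\dim}_{\mathrm{box}}(\cZ)+\e)/(k+1)}\qquad(t\to0^{+}),
\end{equation}
because \eqref{e:ft-count} gives $s_n^{*}\lesssim_\e n^{-a}$ with $a=(k+1)/(\overline{\dim}_{\mathrm{box}}(\cZ)+\e)>1$, whence (splitting the tail $\sum_{\,\tilde\ell_n<\eta}\tilde\ell_n$ carefully) the covering number of $\barX(\cZ)$ is $\lesssim_\e\eta^{-1/a}$, i.e. $\overline{\dim}_{\mathrm{box}}(\barX(\cZ))\le(\overline{\dim}_{\mathrm{box}}(\cZ)+\e)/(k+1)$. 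Equivalently one may cite the classical description of the box dimension of a null compact subset of $\R$ via its gap sequence.

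The estimate \eqref{e:ft-count} is the heart of the matter and rests on a dichotomy at the scale of each gap. Set $m_0:=\lceil(k+1)/2\rceil$ and assume $\#\cZ\ge m_0$ (else $\barX(\cZ)$ is finite). Since $e_0\ge0$ is $C^1$, every point of $\cZ$ is a zero of $e_0$ of multiplicity $\ge2$; hence if an interval $J$ meets $\cZ$ in at least $m_0$ points, then $e_0$ has $\ge 2m_0\ge k+1$ zeros in $J$ counted with multiplicity, so by iterated Rolle each $e_0^{(i)}$, $0\le i\le k$, has a zero in $J$. Estimating $\|e_0^{(k)}\|_{L^\infty(J)}$ by the modulus of continuity $\omega_k(|J|)$ of $e_0^{(k)}$ and integrating down gives $\|e_0\|_{L^\infty(J)}\le|J|^{k}\omega_k(|J|)$, so $\int_J e_0\le|J|^{\,k+1}\omega_k(|J|)$. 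Now for each gap $G_j$ let $L_j\ (\ge\ell_j:=|G_j|)$ be the length of the shortest closed interval containing $\overline{G_j}$ and at least $m_0$ points of $\cZ$; applying the bound above to that interval gives $s_j\le C L_j^{\,k+1}$. Finally, writing $N_\cZ(r)$ for the number of $r$-mesh cells meeting $\cZ$, one checks $\#\{j:L_j>r\}\lesssim_k N_\cZ(r)$: gaps with $\ell_j>r$ number $\le N_\cZ(r)$ (distinct ones have left endpoints in distinct $r$-cells), while a gap with $\ell_j\le r<L_j$ forces $\#(\cZ\cap[\a_j,\a_j+r])\le m_0-1$, which lets each $r$-cell contain the left endpoints of at most $m_0-1$ such gaps. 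Since $N_\cZ(r)\le C_\e r^{-(\overline{\dim}_{\mathrm{box}}(\cZ)+\e)}$ for small $r$, combining $s_j\le CL_j^{k+1}$ with this bound yields \eqref{e:ft-count}; this step — and within it the bound $\#\{j:L_j>r\}\lesssim_k N_\cZ(r)$ — is the main obstacle, since the naive estimates $\int_{G_j}e_0\le\ell_j\|e_0\|_{L^\infty}$ or the order-$2$ vanishing at a single gap endpoint only produce the exponent $\min(k+1,4)$ and fail for $k\ge4$; one genuinely needs that a gap is \emph{either} flanked by enough zeros of $e_0$ to force the full $(k+1)$st-order smallness, \emph{or} sits where $\cZ$ is too sparse at the relevant scale for there to be many such gaps.

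Letting $\e\downarrow0$ in the first paragraph gives \eqref{e:boxdime0}. If $e_0\in C^\infty$ we take $k$ arbitrarily large, so $\overline{\dim}_{\mathrm{box}}(\barX(\cZ))=0$, and $\dim_\cH\le\overline{\dim}_{\mathrm{box}}$ gives the Hausdorff statement.

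For sharpness, fix $k$ and $\e>0$. Choose a fixed slowly decaying modulus $\omega$ (e.g. $\omega(s)\asymp1/\log\log(1/s)$) and build $\cZ\subset[0,1]$ as a generalized Cantor set of positive measure — so $\overline{\dim}_{\mathrm{box}}(\cZ)=1$ — whose decreasing sequence of complementary lengths satisfies $\ell_n^{*}\asymp\e_0/\big(n(\log n)^2\big)$ with $\e_0$ small enough that $\sum_n\ell_n^{*}<1$. On each gap $G_j$ place the profile $\omega(\ell_j)\,\ell_j^{\,k}\,F\big((\cdot-\a_j)/\ell_j\big)$, where $F\in C^\infty$ is a fixed positive bump on $(0,1)$ vanishing to infinite order at the endpoints, and set $e_0=0$ on $\cZ$; one verifies that $e_0$ is a nonnegative $C^k$ function (not $C^{k+1}$) with $\{e_0=0\}=\cZ$, and $s_j=\|F\|_{L^1}\,\omega(\ell_j)\,\ell_j^{\,k+1}$ — the delicate point being the nonnegativity of the glued $e_0$. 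Then $\barX(\cZ)$ has complementary lengths $\asymp s_n^{*}\asymp\omega(1/n)\,n^{-(k+1)}(\log n)^{-2(k+1)}$, whence $\overline{\dim}_{\mathrm{box}}(\barX(\cZ))=\limsup_n\frac{\log n}{-\log s_n^{*}}=\frac1{k+1}$; the matching Hausdorff lower bound $\dim_\cH(\barX(\cZ))>\frac1{k+1}-\e$ follows from a Frostman measure on $\barX(\cZ)$ obtained by pushing forward a natural mass distribution on $\cZ$.
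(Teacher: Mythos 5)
Your upper-bound argument for \eqref{e:boxdime0} is correct but takes a genuinely different route from the paper's, and it is considerably longer. The paper's proof is direct: assuming (WLOG) that $\cZ$ is perfect, $e_0$ and its first $k$ derivatives vanish at every point of $\cZ$, so Taylor gives $e_0(\a)\le C(\dist{\a}{\cZ})^k$; then \eqref{e:Xbarsep} shows that the image of each radius-$r$ ball centered in $\cZ$ has diameter $\lesssim r^{k+1}$, whence $N(Cr^{k+1};\barX(\cZ))\le 2N(r;\cZ)$ and the dimension bound follows at once. You instead read $\overline{\dim}_{\mathrm{box}}(\barX(\cZ))$ off the gap sequence of the compact null set $\barX(\cZ)$ (its complementary intervals being the images of the gaps $G_j$ of $\cZ$, of length comparable to $s_j=\int_{G_j}e_0$ by \eqref{e:Xbarsep}), bound $s_j\le CL_j^{k+1}$ by a Rolle-with-multiplicity argument using $m_0=\lceil (k+1)/2\rceil$ points of $\cZ$ (each a zero of $e_0$ of multiplicity $\ge 2$ since $e_0\ge0$ is $C^1$), and control $\#\{j:L_j>r\}\le m_0 N_\cZ(r)$ via your dichotomy. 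I checked these steps and they hold (including the cell-counting and the tail-sum covering estimate; the caveat that $a>1$ only when the claimed bound is nontrivial is harmless). What your route buys is that it never discards points of $\cZ$: the paper's ``WLOG perfect'' step silently removes the (countably many) isolated points, whose image could a priori still carry positive box-counting dimension, whereas your argument uses only multiplicity two at each point of $\cZ$ and so covers the general closed $\cZ$ directly. What it costs is the extra machinery (the $L_j$ dichotomy and the gap-sequence characterization), which the paper's pointwise bound $e_0\lesssim(\dist{\cdot}{\cZ})^k$ renders unnecessary once $\cZ$ is perfect.

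For the sharpness statement your strategy is essentially the paper's (Proposition~\ref{p:Cantor}): a Cantor-type $\cZ$ of positive measure, rescaled bumps on the gaps to prescribe $e_0$, upper box dimension of $\barX(\cZ)$ from the image gap lengths, and a Frostman lower bound for $\dim_\cH$ using the pushforward of the mass on $\cZ$. Your variant (gap lengths $\asymp \e_0/(n\log^2 n)$, an extra slowly decaying modulus $\omega$ to guarantee genuine $C^k$ regularity at $\cZ$ -- a point where you are in fact more careful than the borderline choice $\b=\g^k$ in the paper) is sound in outline and even attains the exponent $\tfrac{1}{k+1}$ exactly. However, the decisive step is only asserted: the Frostman estimate (the paper's Step~2 in the proof of Proposition~\ref{p:Cantor}) depends on the actual arrangement of the gaps (e.g.\ the symmetric middle-gap construction), not merely on the sequence of their lengths, and it is also what is needed to obtain the equality $\dim_{\mathrm{box}}(\barX(\cZ))=\dim_\cH(\barX(\cZ))$ claimed in the theorem, rather than the one-sided bounds your sketch records. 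With the symmetric arrangement the computation goes through exactly as in the paper (an interval of length comparable to a stage-$j$ image interval meets at most two of them, each carrying mass $\le 2^{-j}$), so this is a matter of supplying detail rather than a wrong idea -- but as written that part of your proposal leans on precisely the computation the paper actually carries out.
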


\begin{REMARK}
We state \eqref{e:boxdime0} in terms of $\overline{\dim}_{\mathrm{box}}$ rather than $\dim_\cH$, because in the `typical' scenario where $\dim_\cH(\cZ) = 1$, our version gives a stronger statement than the corresponding one for Hausdorff dimension. (Recall that the box-counting dimension dominates the Hausdorff dimension, c.f. \eqref{e:dimensions} below.)
\end{REMARK}

\subsection{Outline}

The remainder of the paper is structured as follows.  In Section \ref{s:msolns}, we discuss existence and uniqueness of measure-valued solutions with $C^1_t C^k_x$ velocities, $k\ge 1$.  We recover the expected global-in-time existence and uniqueness for unidirectional data (with $e_0\ge 0$) as a byproduct of our proof of the key estimate \eqref{e:Xbarsep}.  We close Section \ref{s:msolns} with an analysis of flocking and stability.  

With the theory above in hand, we finish the proof of Proposition \ref{p:barbX} in Section \ref{s:barbX} and use it to prove Theorem \ref{t:main}.  All that is needed here is an understanding of $\barbX$ along horizontal slices. On the other hand, lateral regularity is needed in order to prove Theorem \ref{t:Xreg}; in Section \ref{s:Xreg}, we prove Proposition \ref{p:Xreg} (using some of the previously established flocking estimates) before finishing Theorem \ref{t:Xreg}. 

In Section \ref{s:fine}, we study some fine properties of $\barm$ and $\barX$ in one spatial dimension.  The sharpness statement in Theorem \ref{t:fineprops} is proven by building $e_0$ from a certain Cantor set $\cZ$ of positive measure, using Frostman's Lemma together with \eqref{e:Xbarsep} to establish the dimension of $\barX(\cZ)$.  As the proof shows, the dimension depends not only on $\cZ$, but also on the way $e_0$ approaches zero near $\cZ$.  We close on a related note: Starting from a regular density profile $\rho_0$, we can adjust the rate $e_0$ approaches zero at an isolated point in $\cZ$ in order to ensure a specified local dimension of $\barm$ at the corresponding point of $\barX(\cZ)$. 

\section{Wellposedness for  Measure-Valued Solutions}
\label{s:msolns}

In this section, we treat the well-posedness of the system \eqref{e:m} for measure-valued densities. To be explicit, given an initial measure $m_0\in \cM_+(\R^n)$ and an initial velocity $\bu_0\in C^k(\R^n)$, we will discuss the existence, uniqueness, flocking properties, and stability of solutions $\bu\in C^1([0,T), C^k(\R^n))$, $m\in C_{w^{*}}([0,T);\cM_{+}(\R^n))$ to the following system:
\begin{equation}
\label{e:mmeas}
\left\{ 
\begin{split}
& \frac{D\bu}{Dt}(x,t) = \k\int_{\R^n} \phi(x-y)(\bu(y,t) - \bu(x,t))\dm_t(y), 
\qquad  \bu(x,0) = \bu_0(x);\\
& \int_{\R^n} \xi(y,t) \dm_t(y) - \int_{\R^n} \xi(y,0) \dm_0(y) = \int_0^t \int_{\R^n} \frac{D\xi}{Ds}(y,s) \dm_s(y),
\quad \xi\in C^\infty_c(\R^n\times \R). 
\end{split}\right. 
\end{equation}	
Here $\frac{D}{Ds}=\p_s + \bu\cdot \n$ denotes the advective derivative, and $\cM_{+}(\R^n)$ denotes the set of non-negative Radon measures on $\R^n$, endowed with the topology of weak convergence.  By the latter, we mean convergence on the space $C_{b}(\R^n)$ of continuous bounded functions. In what follows, we will say that $\mu_n$ converges weakly to $\mu$ on $\cM_{+}(\R^n)$ and we write $\mu_n \overset{\ast}{\rightharpoonup} \mu$ if
\[
\int_{\R^n} f \mbox{d}\mu_n \rightarrow \int_{\R^n} f \mbox{d}\mu \qquad \text{for all} \quad  f\in C_b(\R^n).
\]
Since we will deal with measures supported on a bounded set $\Omega$, this convergence coincides with the classical weak-$*$ convergence on $C_{0}(\R^n)$, the predual of $\cM(\R^n)$.

\subsection{Lagrangian Formulation and Local Wellposedness}

\label{ss:lwp}

The Euler Alignment system  for Lagrangian velocities $\bV(\cdot,t)=\bu(\bX(\cdot,t),t)$ takes the form
\begin{equation}
\label{e:Lag}
\left\{ 
\begin{split}
& \dot{\bX}(\a,t) = \bV(\a,t),  \\
& \dot{\bV}(\a,t) = -\k \int_{\O} \phi(\bX(\a,t)-\bX(\g,t))[\bV(\a,t) - \bV(\g,t)]\dm_0(\g), \\
& \bX(\a,0) = \a, \; \bV(\a,0) = \bu_0(\a), \; \a\in \O.
\end{split}
\right.
\end{equation} 
Here $\O$ can be taken to be any compact set containing the support of $m_0$ (we also assume convexity for simplicity). Proving the existence of particle trajectories $(\bX, \bV)\in C^1(\R_+;C^k(\O)\times C^k(\O))$ amounts to a routine application of the Picard Theorem, together with a straightforward estimate eliminating finite-time blowup for $\|\bX(t)\|_{C^k(\O)} + \|\bV(t)\|_{C^k(\O)}$.  To ensure that the particle trajectories yield a solution 
\[
(\bu(\cdot,t), \dm_t)=(\bV(\bX^{-1}(\cdot,t),t), \bX(\cdot,t)_\sharp \dm_0)
\] 
to the Eulerian formulation, we need  $\bX(\cdot,t):\O\to \O(t)$ to remain invertible, and we need $\det \n \bX \ne 0$ on $\O$ to ensure $\bu$ remains in $C^k$. This of course holds at least for a short period of time since one starts from $\bX(\cdot, 0) = \id$.   A continuation of solution can be achieved under the following condition: there exists $\e>0$ such that on  time interval $[0,T)$ one has
\begin{equation}
\label{e:Oe}
\inf_{\substack{\a\ne \b} \in \O} \frac{|\bX(\b,t) - \bX(\a,t)|}{|\b-\a|}>\e, 
\qquad t\in [0,T).
\end{equation}
In fact, this implies $|\det \n \bX(\cdot,t)| \geq \e^n$, as \eqref{e:Oe} guarantees that every eigenvalue of $\n \bX(\cdot, t)$ has an absolute value of at least $\e$. 
\begin{THEOREM}
\label{t:exist}	
For any initial data $(\bu_0, m_0)\in C^k(\R^n)\times \cM_{+}(\R^n)$, there exists a unique solution to \eqref{e:mmeas} on the time interval $[0,T)$. Moreover if \eqref{e:Oe} holds for some $\e$ on that interval then the solution can be extended beyond time $T$.
\end{THEOREM}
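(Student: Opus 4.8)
The plan is to carry out the construction entirely in the Lagrangian picture \eqref{e:Lag} and then transfer the resulting flow to an Eulerian solution of \eqref{e:mmeas}. We regard \eqref{e:Lag} as an autonomous ODE $\frac{d}{dt}(\bX,\bV)=F(\bX,\bV)$ on the Banach space $E:=C^k(\O;\R^n)\times C^k(\O;\R^n)$, where
\[
F(\bX,\bV)(\a)=\Bigl(\,\bV(\a),\ -\k\int_\O\phi(\bX(\a)-\bX(\g))\,[\bV(\a)-\bV(\g)]\dm_0(\g)\,\Bigr).
\]
The first step is to check that $F$ maps $E$ into $E$ and is locally Lipschitz there. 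This is the only point at which smoothness of $\phi$ is used: differentiating $\a\mapsto\phi(\bX(\a)-\bX(\g))$ up to order $k$ via the Fa\`{a} di Bruno formula yields polynomial expressions in the derivatives $\n^j\bX$, $j\le k$, with coefficients controlled by $\|\phi\|_{C^k}$, while integration against $m_0$ costs only the total mass $M_0=m_0(\O)$; the Lipschitz estimate follows from the same bookkeeping applied to differences. The Picard theorem then produces a unique local solution $(\bX,\bV)\in C^1([0,\tau);E)$ with $\bX(\cdot,0)=\id$ and $\bV(\cdot,0)=\bu_0$.

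The second step is to rule out finite-time blow-up, so that $(\bX,\bV)\in C^1(\R_+;E)$. The velocity stays bounded, $\|\bV(\cdot,t)\|_{L^\infty(\O)}\le\|\bu_0\|_{L^\infty(\O)}$: at a label where $|\bV(\cdot,t)|$ is maximal, the component of the right-hand side of the $\bV$-equation along $\bV$ is $\le 0$, which is the usual maximum principle behind flocking, and consequently $\|\bX(\cdot,t)\|_{L^\infty(\O)}$ grows at most linearly in $t$. For the derivatives up to order $k$ one differentiates \eqref{e:Lag}: each term carrying a top-order derivative $\n^k\bX$ or $\n^k\bV$ appears linearly, with coefficients assembled from $\|\phi\|_{C^k}$, $M_0$, and the already-controlled lower-order norms, so Gr\"{o}nwall's inequality closes the estimate on every finite interval.

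With a global Lagrangian flow in hand, the third step produces the Eulerian solution as outlined before the theorem. Because $\bX(\cdot,0)=\id$ and $\bX\in C^1_tC^k_x$, for small $t$ we have $\|\n\bX(\cdot,t)-I\|_{L^\infty}<1/2$, so $\bX(\cdot,t)$ is a $C^k$-diffeomorphism of $\O$ onto $\O(t)$; set $\bu(\cdot,t):=\bV(\bX^{-1}(\cdot,t),t)$ (extended arbitrarily in $C^k$ outside $\O(t)$, which does not affect the dynamics) and $m_t:=\bX(\cdot,t)_\sharp m_0$. Verifying \eqref{e:mmeas} is then mechanical: the velocity equation is \eqref{e:Lag} read through $\frac{d}{dt}\bu(\bX(\a,t),t)=\frac{D\bu}{Dt}(\bX(\a,t),t)$ together with the push-forward identity, and the weak continuity equation follows from $\frac{d}{dt}\xi(\bX(\a,t),t)=\frac{D\xi}{Dt}(\bX(\a,t),t)$, the definition of the push-forward, and Fubini. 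For the continuation claim, suppose \eqref{e:Oe} holds on $[0,T)$ with a fixed $\e$; by $t$-continuity of $\bX$ in $C^k$ the same bi-Lipschitz lower bound $\e$ persists at $t=T$, whence $\bX(\cdot,T)$ is invertible with $|\det\n\bX(\cdot,T)|\ge\e^n>0$, and since the Lagrangian trajectories are already defined past $T$ the formula $(\bV\circ\bX^{-1},\bX_\sharp m_0)$ continues to define an Eulerian solution on a slightly larger interval, so $T$ is not the maximal existence time. Uniqueness follows by running this correspondence backwards: any solution $(\bu,m)$ of \eqref{e:mmeas} with $\bu\in C^1_tC^k_x$ and $k\ge 1$ has a well-defined characteristic flow $\bX$ (the velocity field is $C^1$, hence Lipschitz, in space); the classical method of characteristics for the continuity equation with a $C^1$ velocity field forces $m_t=\bX(\cdot,t)_\sharp m_0$, and then $(\bX,\bu\circ\bX)$ solves \eqref{e:Lag}, so uniqueness for \eqref{e:Lag} identifies $(\bu,m)$ with the solution just constructed.

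The Picard step and the chain-rule verifications are routine; the point that requires genuine care is the $C^k$ a priori estimate of the second step, where one must check that after differentiating \eqref{e:Lag} exactly $k$ times every resulting term is either linear in the top-order derivatives or a product of strictly lower-order quantities, so that the estimate really does close under Gr\"{o}nwall. The passage between the Eulerian and Lagrangian formulations --- in particular the push-forward identity $m_t=\bX(\cdot,t)_\sharp m_0$ invoked in the uniqueness argument --- is classical but should be recorded with the precise regularity hypotheses in force.
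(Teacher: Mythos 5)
Your construction is essentially the argument the paper itself gives (in sketch form): Picard iteration for \eqref{e:Lag} in the Banach space $C^k(\O)\times C^k(\O)$, a maximum principle plus Gr\"onwall bounds ruling out finite-time blowup of the Lagrangian $C^k$ norms, transfer to an Eulerian solution via $\bu=\bV\circ\bX^{-1}$ and $m_t=\bX(\cdot,t)_\sharp m_0$, continuation under \eqref{e:Oe} because that condition forces $|\det\n\bX|\ge\e^n$, and uniqueness by reading any Eulerian solution back into \eqref{e:Lag} through its characteristics.

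The one step you should repair is the phrase ``extended arbitrarily in $C^k$ outside $\O(t)$, which does not affect the dynamics.'' The solution concept \eqref{e:mmeas} requires the velocity equation to hold at every $x\in\R^n$, not only on the material flock, and an arbitrary extension will not satisfy it off $\O(t)$; taken literally it would also contradict the uniqueness you prove a few lines later, since two different extensions would both qualify as solutions. The correct and equally easy remedy is the one implicit in the paper's freedom to take $\O$ to be any compact convex set containing $\supp m_0$: run the same characteristic system for every label $\a\in\R^n$ --- for $\a\notin\supp m_0$ the $\bV$-equation is a linear ODE driven by the already-determined flock trajectories, hence globally solvable with $C^k$ dependence on the label --- and define $\bu(\cdot,t)=\bV(\bX^{-1}(\cdot,t),t)$ on all of $\R^n$. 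With that replacement your verification of \eqref{e:mmeas} and your uniqueness argument go through verbatim, since the characteristics of any $C^1_t C^k_x$ Eulerian solution satisfy exactly this extended system.
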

A similar bound from below on $|\det \n \bX(\cdot,t)|$ follows classically from the Liouville equation for $\det \n \bX(\cdot,t)$ and can be stated directly in Eulerian terms:
\begin{equation}
\label{e:contcrit}
\int_0^{T} \inf_{x\in \R^n} \n \cdot \bu(x,t)\dt > -\infty.
\end{equation}	
Since the initially non-negative $e_0$ remains bounded, and $\phi \ast \rho$ is always bounded, this implies \eqref{e:contcrit}. Consequently, we obtain global existence as shown in \cite{LearShvydkoy2019}.  For our purposes such approach is not productive, however, as we seek to obtain quantitative bi-Lipschitz bounds on the flow map, as in \eqref{e:Oe}, to extract further properties of the limiting mass-measure.

\subsection{Global Wellposedness and Proof of \eqref{e:Xbarsep}}\label{ss:gwp}

For unidirectional solutions we have 
\[
\bV(\a,t) = (V(\a,t),0),
\qquad 
\bX(\a,t) = (X(\a,t), \a_-), 
\qquad 
\a = (\a_1,\a_-).
\]
Then \eqref{e:Lag} becomes a scalar system in terms of the active flow components only:
\begin{equation}
\label{e:Laguni}
\left\{ 
\begin{split}
& \dot{X}(\a,t) = V(\a,t),  \\
& \dot{V}(\a,t) = -\k \int_{\O} \phi(X(\a,t)-X(\g,t),\;\a_- - \g_-))[V(\a,t) - V(\g,t)]\dm_0(\g), \\
& X(\a,0) = \a_1, \;V(\a,0) = u_0(\a).
\end{split}
\right.
\end{equation} 
The continuation criterion \eqref{e:Oe} takes form
\begin{equation}
\label{e:Oeuni}
\inf_{\a\in \O} |\p_{\a_1}X(\a,t)|>\e,
\qquad t\in [0,T).
\end{equation}
Following \cite{L2019CTC}, one can reduce the system \eqref{e:Laguni} to a single equation for $X(\a,t)$:
\begin{equation}
\label{e:Xdot}
\dot{X}(\a,t)
= f_0(\a) - \k \int_{\R^n} \f(X(\a,t)-X(\g,t), \a_- - \g_-)\dm_0(\g),
\end{equation}
where 
\[
\f(x_1,x_-) = \int_0^{x_1} \phi(y, x_-)\dy, 
\qquad \qquad 
f_0(\a) = u_0(\a) + \k\int_{\R^n} \f(\a-\g)\dm_0(\g).
\]
(Note that the quantity $f_0$ is related to $e_0$ via $e_0 = \p_{\a_1} f_0$.) 

Consider equation \eqref{e:Xdot} along two trajectories originating on the same $\a_-$-slice, and take the difference:
\begin{equation}\label{e:Xsep}
\begin{split}
\dot{X}(\a_1+h,\a_-,t) - \dot{X}(\a_1,\a_-,t)
& = \int_{\a_1}^{\a_1+h} e_0(\zeta,\a_-)\dd\zeta \\
&- \k \int_{X(\a_1,\a_-,t)}^{X(\a_1+h,\a_-,t)} \int_{\R^n} \phi(y - X(\zeta,t), \a_- - \zeta_-)\dm_0(\zeta)\dy.
\end{split}
\end{equation}
We use
\begin{equation}
\label{e:convbd}
\uphi M_0 
\le \int_{\R^n} \phi(\a_1 - X(\zeta,t), \a_- - \zeta_-)\dm_0(\zeta)
\le \|\phi\|_{L^\infty} M_0,
\end{equation}
in order to turn \eqref{e:Xsep} into a differential inequality.  (The upper bound in \eqref{e:convbd} is valid for all $\a\in \R^n$; the lower bound is valid for $\a\in \O$.  We only require the upper bound for the purposes of global existence; the lower bound will be useful later.) We denote 
\[
r(t) = X(\a_1+h,\a_-,t) - X(\a_1,\a_-,t)
\]
in the following:
\begin{equation}
\label{e:Xsepineq}
\int_{\a_1}^{\a_1+h} e_0(\zeta,\a_-)\dd\zeta - \k M_0 \|\phi\|_{L^\infty} r(t)
\le  \dot{r}(t)
\le \int_{\a_1}^{\a_1+h} e_0(\zeta,\a_-)\dd\zeta - \k M_0 \uphi \ r(t).
\end{equation}
The proof of the bound \eqref{e:Xbarsep} is completed simply by integrating the differential inequality \eqref{e:Xsepineq} and taking $t\to +\infty$. It also shows \eqref{e:Oeuni} with $\e(T)=\exp(-\k M_0 \|\phi\|_{L^\infty} T)>0$, for all $T\ge 0$, provided $e_0\ge 0$, so that the solution exists for all time. 

\begin{THEOREM}
For any unidirectional initial data $(\bu_0, m_0)\in C^k(\R^n)\times \cM_+(\R^n)$, there exists a unique global-in-time solution to \eqref{e:mmeas} if and only if $e_0\ge 0$.  
\end{THEOREM}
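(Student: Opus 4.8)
The statement is an equivalence, and I would treat the two implications separately. The forward one ($e_0\ge 0\Rightarrow$ global existence) is in essence already contained in the discussion above: when $e_0\ge 0$ the integral $\int_{\a_1}^{\a_1+h}e_0(\zeta,\a_-)\,\dd\zeta$ is nonnegative, so integrating the left-hand inequality in \eqref{e:Xsepineq} and letting $h\to 0^+$ gives $\p_{\a_1}X(\a,t)\ge e^{-\k M_0\|\phi\|_{L^\infty}t}$; this is precisely the continuation criterion \eqref{e:Oeuni} with $\e(T)=e^{-\k M_0\|\phi\|_{L^\infty}T}>0$ on every $[0,T)$, so the continuation statement of Theorem \ref{t:exist} yields a solution on $[0,\infty)$, unique by the local uniqueness there. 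Thus the only thing to add is the converse.

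For the converse I would argue the contrapositive: if $e_0$ is negative somewhere, no solution of \eqref{e:mmeas} is global. If $e_0(\a_1^0,\a_-^0)<0$, then---since $e_0$ is continuous, being the $\a_1$-derivative of $f_0\in C^1$---there exist $\b<\g$ with $I:=\int_\b^\g e_0(\zeta,\a_-^0)\,\dd\zeta<0$; enlarging the (arbitrary) label domain $\O$, we may take $[\b,\g]\times\{\a_-^0\}\subset\O$. Let $(\bu,m)$ be a solution on its maximal interval $[0,T^*)$, put $h=\g-\b$, and set $r(t)=X(\g,\a_-^0,t)-X(\b,\a_-^0,t)$. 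Two facts are then in tension. First, $\bu(\cdot,t)\in C^k(\R^n)$ with $k\ge1$ is locally Lipschitz, so its flow map is injective at each $t<T^*$ and preserves the $\a_-$-coordinate; hence the two trajectories issued from $(\b,\a_-^0)$ and $(\g,\a_-^0)$ never coincide, so $r(t)\ne0$, and since $r(0)=h>0$ and $r$ is continuous, $r(t)>0$ on $[0,T^*)$. Second, taking the difference of \eqref{e:Xdot} along these two trajectories exactly as in \eqref{e:Xsep} and discarding the interaction term---which is nonnegative since $\phi\ge0$ and $r(t)>0$---gives $\dot r(t)\le I$ on $[0,T^*)$, hence $r(t)\le h+It$. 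Since $I<0$, positivity of $r$ forces $t<h/|I|$ for every $t<T^*$; therefore $T^*\le h/|I|<\infty$ and the solution breaks down in finite time.

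I do not expect a serious obstacle---each implication reduces to one of the two elementary differential inequalities already displayed in Section~\ref{ss:gwp}, the forward one using only the upper bound in \eqref{e:convbd} and the converse only $\phi\ge0$. The step deserving the most care is the tension between the two facts in the converse: one must be certain that $r(t)>0$ genuinely persists as long as the solution does, which is exactly the statement that a $C^1_x$ velocity field has no crossing trajectories---the underlying mechanism being the identity $\p_{\a_1}X(\a,t)=\exp\!\big(\int_0^t\p_{x_1}u(\bX(\a,s),s)\,\dd s\big)$, which forces $\p_{x_1}u\to-\infty$ along the slice as $t\uparrow T^*$, so $\bu$ cannot remain in $C^1_x$. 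A minor bookkeeping point is to confirm that the scalar reduction \eqref{e:Xdot} and the difference identity \eqref{e:Xsep} remain valid along the chosen slice after $\O$ has been enlarged; this is automatic, since the interaction is always integrated against $m_0$---supported in the original $\O$---and nothing in the derivation of Section~\ref{ss:gwp} used $\a\in\O$ beyond the convolution lower bound, which plays no role here.
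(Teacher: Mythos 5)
Your proposal is correct and follows essentially the same route as the paper: the forward direction is precisely the paper's integration of the lower inequality in \eqref{e:Xsepineq} to obtain \eqref{e:Oeuni} with $\e(T)=e^{-\k M_0\|\phi\|_{L^\infty}T}$ and then invoke the continuation part of Theorem \ref{t:exist}, while your converse spells out the same trajectory-crossing mechanism the paper relies on (the upper part of \eqref{e:Xsep} with the nonnegative interaction term discarded), which the paper only sketches in Section \ref{ss:wpe}. No gaps; this is the intended argument, just written out more explicitly for the ``only if'' half.
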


\begin{REMARK}
The Lagrangian argument naturally offers more detailed information about the particle trajectories than the original approach of \cite{LearShvydkoy2019}. However, it should be noted that the Eulerian approach is stable to perturbations and extends global existence to `almost' unidirectional solutions.
\end{REMARK}

\subsection{Flocking Estimates}

\label{ss:flocking} 

We now establish flocking estimates for the Euler Alignment system.  This will facilitate our stability analysis in the following subsection; furthermore, the exponentially decaying bound on $\n \bV$ will allow us to streamline the proof of Proposition \ref{p:Xreg} below.  Note that in this subsection and the following one, we do \textit{not} assume unidirectionality, but we do assume a heavy-tailed kernel.

\subsubsection{Basic Flocking and Alignment Bounds}
As above, we consider a flock of finite diameter and a compact domain $\Omega$ containing $\supp m_0$.  We define the flock parameters as follows:
\[
\cD_{\Omega}(t)=\max_{\a,\b\in\Omega}|\bX(\a,t)-\bX(\b,t)|, \qquad  \cA_{\Omega}(t)=\max_{\a,\b\in\Omega}|\bV(\a,t)-\bV(\b,t)|. 
\]
Since the domain $\Omega$ is fixed for all time, one can mimic the standard flocking argument for the discrete Cucker--Smale system by applying the Rademacher Lemma. The result is
\[
\ddt \cA_{\Omega}(t)\leq -\k M_0\phi(\cD_{\Omega}(t))\cA_{\Omega}(t).
\]
In particular, for heavy-tailed kernels we obtain flocking and exponentially fast alignment:
\begin{equation}\label{e:flocking}
\sup_{t\geq 0}\cD_{\Omega}(t)\leq \overline{\cD}_{\Omega}, \qquad \cA_{\Omega}(t)\leq \cA_{\Omega}(0)e^{-\k M_0 \phi(\overline{\cD}_{\Omega}) t}.
\end{equation}

\subsubsection{Bounds on the Deformation Tensor}
The bound \eqref{e:flocking} has appeared previously in, for example, \cite{TT2014}.  We now provide a new refinement of this flocking behavior by establishing estimates on the deformation tensor of the flow map.  Differentiating \eqref{e:Lag}, we obtain the following, for all $t\geq 0$ and $\a\in\Omega$:
\begin{align}
\label{e:nX}
\n \dot\bX(\a,t) & = \n \bV(\a,t),\\ 
\vspace{1 mm}
\label{e:nV}
\begin{split}
\n \dot\bV(\a,t) & = -\k  \n^\top \bX(\a,t) \int_{\R^{n}} \n \phi(\bX(\a,t) - \bX(\gamma,t)) \otimes (\bV(\a,t) - \bV(\g,t) ) \dm_0(\g)  \\
& \qquad - \n  \bV(\a,t)  \int_{\R^{n}}  \phi(\bX(\a,t) - \bX(\g,t)) \dm_0(\g).
\end{split}
\end{align}
Here, $\n^\top \bX(\a,t)$ denotes the matrix transpose of $\n \bX(\a,t)$.   
Combining \eqref{e:flocking}--\eqref{e:nV}, we get
\begin{align*}
\ddt \|\n \bX\|_{L^{\infty}(\Omega)} 
& \leq \|\n \bV\|_{L^{\infty}(\Omega)},\\
\ddt \|\n \bV\|_{L^{\infty}(\Omega)} 
& \leq \k M_0 \|\n \phi\|_{\infty} \cA_{\Omega}(0)e^{-\k M_0 \phi(\overline{\cD}_{\Omega})t} \|\n \bX\|_{L^{\infty}(\Omega)}-\k M_0 \phi(\overline{\cD}_{\Omega})\|\n \bV \|_{L^{\infty}(\Omega)}.  
\end{align*}
Let us simply rewrite it as
\begin{equation}\label{e:system(x,v)}
\dot{x}\leq v, \qquad \dot{v} \leq a e^{-bt}x-bv.
\end{equation}
where $a= \k M_0 \|\n \phi\|_{\infty} \cA_{\Omega}(0), b=\k M_0 \phi(\overline{\cD}_{\Omega}).$ Indeed, denoting $w = ve^{bt}$ we obtain
\[
\dot{x}\leq w e^{-bt}, \qquad \dot{w}\leq ax.
\]
Multiplying by factors to equalize the right hand sides, we obtain
\[
\ddt(ax^2 +e^{-bt}w^2)\leq 4axw e^{-bt}\leq 2 e^{-bt/2}\sqrt{a}(ax^2+e^{-bt}w^2).
\]
This immediately implies
\begin{equation}\label{e:aux(x,v)}
ax^2 +e^{bt}v^2\leq \tfrac{4\sqrt{a}}{b}(ax_0^2+v_0^2).
\end{equation}
In addition, we can read off bounds for each parameter individually:
\begin{equation}\label{e:final(x,v)}
x\leq \tfrac{2}{a^{1/4} b^{1/2}}\sqrt{a x_0^2 + v_0^2}, \qquad v \leq e^{-\frac{bt}{2}}\tfrac{2 a^{1/4}}{b^{1/2}}\sqrt{a x_0^2 + v_0^2}.
\end{equation}
Noting that $\n \bX(\cdot, 0) = \text{Id}, \n V(\cdot, 0) = \n \bu_0$, the estimate \eqref{e:aux(x,v)} implies
\begin{equation}\label{e:final(X,V)}
a\|\n \bX\|_{L^{\infty}(\Omega)}^2+e^{bt}\|\n \bV\|_{L^{\infty}(\Omega)}^2\leq \tfrac{4\sqrt{a}}{b}\left(a+\|\n\bu_0\|_{L^{\infty}(\Omega)}^2\right).
\end{equation}

\subsection{Stability}\label{s:stab}
 
We now turn our attention to stability estimates.  
\subsubsection{The KR Distance}

We measure the distance between two mass measures $m_t'$ and $m_t''$ using the Wasserstein-1 metric $W_1$.  We assume these measures have equal mass $M_0$ and zero momentum, and have support inside the same convex, compact set $\O$.  By the Kantorovich-Rubinstein Theorem, the distance between two such measures $\mu$ and $\nu$ is 
\begin{equation}\label{def:W1}
W_1(\mu,\nu)=\sup_{\text{Lip}(f)\leq 1}\left|\int_{\R^n}f(\gamma)\mbox{d}\mu(\gamma)-\int_{\R^n}f(\gamma)\mbox{d}\nu(\gamma) \right|.
\end{equation}
Note that a sequence of such measures with $\supp\mu_n\subset\Omega$ satisfies $W_1(\mu_n,\mu)\rightarrow 0$ if and only if $\mu_n \overset{\ast}{\rightharpoonup} \mu$.

\subsubsection{Stability of the Flow Map}
Let us consider two solutions $m_t'$, $m_t''$ on a common time interval of existence $[0,T)$, and let $(\bX', \bV')$ and $(\bX'', \bV'')$ denote the associated flow maps. We also denote the flock parameters by $\cD_\O'$, $\cD_\O''$, $\cA_\O'$, $\cA_\O''$, and the initial velocities by $\bu_0'$, $\bu_0''$.  Clearly,
\[
\ddt \| \bX'-\bX''\|_{L^{\infty}(\O)}\leq \| \bV'- \bV''\|_{L^{\infty}(\O)}.
\]
For the velocities, note that $\|\bV' - \bV''\|_{L^\infty(\O)}$ is a Lipschitz function in time; assume without loss of generality it is differentiable at time $t$.  Let $\ell\in (\R^n)^*$, $|\ell| = 1$ and $\a\in \O$ be a maximizing couple such that at time $t$ we have $\ell[\bV'(\a,t) - \bV''(\a,t)] = \|\bV' - \bV''\|_{L^\infty(\O)}$.  Then again by Rademacher's Lemma, we have
\begin{align*}
\ddt &\| \bV'-\bV''\|_{L^{\infty}(\O)}
= \ell (\dot\bV'(\a,t) - \dot\bV''(\a,t)) \\
& = \k\int_{\O}\phi(\bX'(\a,t)-\bX'(\g,t))\ell[\bV'(\g,t)-\bV'(\a,t)] \dm_0(\g)\\
& \quad -\k\int_{\O}\phi(\bX''(\a,t)-\bX''(\g,t))\ell[\bV''(\g,t)-\bV''(\a,t)]\dm_0''(\g)\\
& = \k\int_{\O} \phi(\bX'(\a,t)-\bX'(\g,t)) \ell[ \bV'(\g,t)-\bV'(\a,t)] [\dm_0'(\g)-\dm_0''(\g)]\\
& \quad + \k \int_{\O}\left[\phi(\bX'(\a,t)-\bX'(\g,t))-\phi(\bX''(\a,t)-\bX''(\g,t))\right]\ell[\bV'(\g,t)-\bV'(\a,t)]\dm_0''(\g)\\
& \quad + \k\int_{\O}\phi(\bX''(\a,t)-\bX''(\g,t))\ell\big[(\bV'(\g,t)-\bV''(\g,t))-(\bV'(\a,t)-\bV''(\a,t))\big]\dm_0''(\g).
\end{align*}
We label the terms on the right $I$, $II$ and $III$ and estimate them in turn. For $I$, we use the KR-distance:
\[
|I| \le 
\k \|\phi\|_{W^{1,\infty}}\left(\|\n \bX'\|_{L^{\infty}}\cA_{\O}'(t)+\|\n \bV'\|_{L^{\infty}} \right)W_1(m_0',m_0'').
\]
The second term is bounded by
\[
|II| \le 2\k M_0\|\n \phi\|_{\infty} \| \bX'- \bX''\|_{L^{\infty}(\O)}\cA_{\O}'(t).
\]
For the last term we use maximality of $\ell[\bV'(\a,t) - \bV''(\a,t)]$ and pull out the kernel first:
\begin{align*}
III
& = \k\int_{\O}\phi(\bX''(\a,t)-\bX''(\g,t))\ell\big[(\bV'(\g,t)-\bV''(\g,t))-(\bV'(\a,t)-\bV''(\a,t))\big]\dm_0''(\g) \\
&\leq \k \phi(\cD_{\Omega}''(t)) \int_{\O}\ell\big[(\bV'(\g,t)-\bV''(\g,t))-(\bV'(\a,t)-\bV''(\a,t))\big]\dm_0''(\g)\\
& = \k \phi(\cD_{\O}''(t)) \ell\left[\int_{\O}(\bV'(\g,t)-\bV''(\g,t))\dm_0''(\g)\right]-\k M_0 \phi(\cD_{\O}''(t))\| \bV' - \bV''\|_{L^{\infty}(\O)}\\
& = \k \phi(\cD_{\O}''(t)) \ell \int_{\O}\bV'(\g,t)[\dm_0''(\g) - \dm_0'(\g)] - \k M_0 \phi(\cD_{\O}''(t)) \| \bV' - \bV'' \|_{L^{\infty}(\O)}.
\end{align*}
In the last step we used (twice) equality of momenta: $\int \bV' \dm_0' = \int \bV'' \dm_0'' = 0$. Continuing,
\[
III \leq \k \|\phi\|_{\infty}\|\n \bV'\|_{L^{\infty}(\O)}W_1(m_0',m_0'')-\k M_0 \phi(\cD_{\O}''(t)) \| \bV' - \bV'' \|_{L^{\infty}(\O)}.
\]

Putting all the estimates together we obtain the system
\begin{align*}
\ddt \| \bX'-\bX''\|_{L^{\infty}(\O)}
& \leq  \| \bV'-\bV''\|_{L^{\infty}(\O)},\\
\ddt \| \bV'-\bV''\|_{L^{\infty}(\O)} 
& \leq 2\k \|\phi\|_{W^{1,\infty}}\left(\|\n \bX'\|_{L^{\infty}}\cA_{\O}'(t)+\|\n \bV'\|_{L^{\infty}} \right)W_1(m_0',m_0'') \\
& \quad + 2\k M_0\|\n \phi\|_{\infty} \| \bX'- \bX''\|_{L^{\infty}(\O)}\cA_{\O}'(t) \\
& \quad -\k M_0 \phi(\cD_{\O}''(t)) \| \bV' - \bV'' \|_{L^{\infty}(\O)}.
\end{align*}
Using the estimate \eqref{e:aux(x,v)} on the deformation tensor and \eqref{e:flocking} on the diameter
and amplitude, we conclude
\[
\ddt \| \bV'-\bV''\|_{L^{\infty}(\O)} \leq a e^{-b t}\left[W_1(m_0', m_0'') + \| \bX'-\bX''\|_{L^{\infty}(\O)} \right]-b \| \bV'-\bV''\|_{L^{\infty}(\O)}.
\]
So, we obtain the same system \eqref{e:system(x,v)} as in our flocking estimates, but for the new pair
\[
x=W_1(m_0',m_0'') + \| \bX'-\bX''\|_{L^{\infty}(\O)}, \qquad v=\| \bV'-\bV''\|_{L^{\infty}(\O)}.
\]
Using \eqref{e:final(x,v)} and recalling that our initial quantities are now 
\[
x(0)=W_1(m_0', m_0'')\quad  \mbox{and}   \quad  v(0)=\|\bu_0'-\bu_0''\|_{L^{\infty}(\Omega)},
\]
 we obtain the following for kernels with heavy tail
\begin{align}
\label{e:stability1}
\| \bX'-\bX''\|_{L^{\infty}(\O)} & \leq C \left[ W_1(m_0', m_0'') + \|\bu_0'-\bu_0''\|_{L^{\infty}(\O)}\right],
\\
\label{e:stability2}
\| \bV'-\bV''\|_{L^{\infty}(\O)}
& \leq C e^{-ct} \left[ W_1(m_0', m_0'') + \|\bu_0'-\bu_0''\|_{L^{\infty}(\O)}\right].
\end{align}
The above inequalities hold for all time $t\in [0,T)$, and $C, c > 0$ depend only on the initial diameters of the flocks, the common mass $M_0$, and the kernel $\phi$.

\subsubsection{Stability of the Mass Measure}
The estimates \eqref{e:stability1} and \eqref{e:stability2} already express stability of the characteristics of the flock; however, the ultimate application lies in estimating the KR-distance $W_1(m_t',m_t'')$ and establishing contractivity of the dynamics. Toward this end, let us fix a function $f$ with $\text{Lip}(f) \leq 1$, and write
\begin{align*}
& \int_{\O} f(\g) \dm_t'(\g) - \int_{\O} f(\g)\dm_t''(\g) = \int_{\O} f (\bX'(\g,t))\dm_0'(\g) - \int_{\O} f (\bX''(\g,t))\dm_0''(\g)\\
& = \int_{\O} f (\bX'(\g,t))[\dm_0'(\g)-\dm_0''(\g)]-\int_{\O} [f (\bX'(\g,t))-f(\bX''(\g,t))] \dm_0''(\g) \\
&\leq \text{Lip}(f(\bX)) W_1(m_0', m_0'') + M_0 \|\bX'-\bX''\|_{L^{\infty}(\O)}.
\end{align*}
Using $\text{Lip}(f(\bX))\leq \|\n \bX(t)\|_{L^{\infty}}$ and applying the deformation and stability estimates \eqref{e:final(X,V)}, \eqref{e:stability1}, we get
\[
W_1(m_t', m_t'')\leq  C \left[ W_1(m_0', m_0'')+\|\bu_0'-\bu_0''\|_{L^{\infty}(\O)}\right].
\]
Since this estimate holds for all time, passing to the limit $t \to \infty$ we make the same conclusion for the limiting measures $\barm':=\barbX'_\sharp m_0'$ and $\barm'':=\barbX''_\sharp m_0''$:
\[
W_1(\barm', \barm'')\leq  C \left[ W_1(m_0', m_0'')+\|\bu_0'-\bu_0''\|_{L^{\infty}(\O)}\right].
\]

\section{Concentration of Mass for Unidirectional Solutions}

\subsection{Horizontal Slices of $\barbX$ and the Lebesgue Decomposition of $\barm$}

\label{s:barbX}

In this section, we use \eqref{e:Xbarsep} to establish the remaining properties of the limiting flow map that comprise the statement of Proposition \ref{p:barbX}.  Then we prove Theorem \ref{t:main} as a consequence.

\subsubsection{Consequences of \eqref{e:Xbarsep}}

We use the following notation:
\[
\begin{split}
\O_{\a_-} & = \{\a_1\in \R: (\a_1, \a_-)\in \O\},
\quad \quad 
\cZ_{\a_-} = \{\a_1\in \R: (\a_1, \a_-)\in \cZ\}, \\
\cP_{\a_-} & = \{\a_1\in \R: (\a_1, \a_-)\in \cP\} = \R\backslash \cZ_{\a_-},\quad 
\barX_{\a_-}(\a_1) = \barX(\a_1,\a_-).
\end{split}
\]

The statements in the following Corollary must be collected, but their proofs are trivial using  \eqref{e:Xbarsep}.
\begin{COROL} For each $\a_-\in \R^{n-1}$, the following statements are true:
\begin{itemize}
	\item The map $\barX_{\a_-}$ is monotonically increasing, with $\barX_{\a_-}(\b) = \barX_{\a_-}(\g)$ if and only if $\int_{\b}^{\g} e_0(\zeta, \a_-)\dd\zeta~=~0$.  
	\item The map $\a_1\mapsto \barX_{\a_-}(\a_1)$ is absolutely continuous, therefore a.e. differentiable. Furthermore, we have the following upper and lower bounds, valid for $\a=(\a_1, \a_-)\in \O\backslash \p\cZ$:
	\begin{equation}
	\label{e:pa1Xbarbds}
	\frac{e_0(\a_1, \a_-)}{\k M_0 \|\phi\|_{L^\infty}}
	\le \p_{\a_1} \barX_{\a_-}(\a_1) 
	\le \frac{e_0(\a_1, \a_-)}{\k M_0 \uphi}.
	\end{equation}
\end{itemize}
\end{COROL}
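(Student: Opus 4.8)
The plan is to read all three assertions directly off the two-sided estimate \eqref{e:Xbarsep} of \prop{p:barbX}, using in addition only the standing hypothesis $e_0\ge 0$ and the continuity of $e_0$. Throughout, fix $\a_-\in\R^{n-1}$; since $\O$ is compact and convex, the slice $\O_{\a_-}$ is a compact interval and $e_0(\cdot,\a_-)$ is continuous, hence bounded, on it. Recall also that the lower bound in \eqref{e:Xbarsep} is valid for every pair $\b<\g$, while the upper bound requires both endpoints to lie in $\O$.

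First I would handle monotonicity and the characterization of equality. For $\b<\g$ the lower bound gives $\barX_{\a_-}(\g)-\barX_{\a_-}(\b)\ge\frac{1}{\k M_0\|\phi\|_{L^\infty}}\int_\b^\g e_0(\zeta,\a_-)\dd\zeta\ge 0$, so $\barX_{\a_-}$ is nondecreasing (this is the sense of ``monotonically increasing'' intended in the Corollary). If in addition $\barX_{\a_-}(\b)=\barX_{\a_-}(\g)$, that same inequality forces $\int_\b^\g e_0(\zeta,\a_-)\dd\zeta\le 0$, hence $=0$ since $e_0\ge 0$. Conversely, if $\int_\b^\g e_0(\zeta,\a_-)\dd\zeta=0$ and the two endpoints lie in $\O$, the upper bound gives $\barX_{\a_-}(\g)-\barX_{\a_-}(\b)\le 0$, which together with monotonicity yields equality. (The ``only if'' direction is global; the ``if'' direction uses the upper bound and is therefore asserted on $\O_{\a_-}$; the roles of $\b,\g$ are of course symmetric.)

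Next, for the second bullet: absolute continuity follows by summing the upper bound of \eqref{e:Xbarsep} over a finite pairwise-disjoint family $(\b_i,\g_i)\subset\O_{\a_-}$, since monotonicity makes the increments nonnegative, so that $\sum_i|\barX_{\a_-}(\g_i)-\barX_{\a_-}(\b_i)|\le\frac{1}{\k M_0\uphi}\int_{\cup_i(\b_i,\g_i)}e_0(\zeta,\a_-)\dd\zeta\le\frac{\|e_0(\cdot,\a_-)\|_{L^\infty(\O_{\a_-})}}{\k M_0\uphi}\sum_i(\g_i-\b_i)\to 0$ as the total length tends to $0$ (in fact this shows $\barX_{\a_-}$ is Lipschitz on $\O_{\a_-}$); hence $\barX_{\a_-}$ is a.e.\ differentiable. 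For the bounds \eqref{e:pa1Xbarbds} I would apply \eqref{e:Xbarsep} to the endpoints $\a_1$ and $\a_1+h$ and divide by $h$ (with the obvious sign bookkeeping when $h<0$, where the two sign changes cancel), obtaining a difference quotient squeezed between $\frac{1}{\k M_0\|\phi\|_{L^\infty}}$ and $\frac{1}{\k M_0\uphi}$ times $\frac1h\int_{\a_1}^{\a_1+h}e_0(\zeta,\a_-)\dd\zeta$; letting $h\to 0$ and using continuity of $e_0$, every Dini derivative of $\barX_{\a_-}$ at $\a_1$ lies in $\big[\frac{e_0(\a_1,\a_-)}{\k M_0\|\phi\|_{L^\infty}},\frac{e_0(\a_1,\a_-)}{\k M_0\uphi}\big]$. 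Thus \eqref{e:pa1Xbarbds} holds at every point of $\O_{\a_-}$ at which $\barX_{\a_-}$ is differentiable---a.e.\ point, and trivially every point of the interior of $\cZ$, where $\barX_{\a_-}$ is locally constant and both sides vanish; the remaining points of $\O\setminus\p\cZ$ lie in $\cP$, and there the pointwise differentiability needed to state \eqref{e:pa1Xbarbds} at every such point is supplied by the regularity of $\barbX$ in \prop{p:Xreg}.

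I do not expect any genuine obstacle here---as the statement of the Corollary already signals, the content lies entirely in \eqref{e:Xbarsep}. The only things requiring attention are the bookkeeping of which half of \eqref{e:Xbarsep} is available globally versus only on $\O$ (monotonicity and ``equal $\Rightarrow$ zero integral'' are global, the rest of the second bullet lives on $\O_{\a_-}$), and the fact that the \emph{pointwise} (rather than merely a.e.) form of \eqref{e:pa1Xbarbds} on $\cP$ is not forced by \eqref{e:Xbarsep} alone but rests on \prop{p:Xreg}.
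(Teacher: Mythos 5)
Your proposal is correct and follows exactly the route the paper intends: the paper offers no written proof beyond asserting that the statements are ``trivial using \eqref{e:Xbarsep}'', and your argument is precisely that, with the details filled in. Your extra care about which half of \eqref{e:Xbarsep} is available off $\O$, and the observation that the \emph{pointwise} (rather than a.e.) validity of \eqref{e:pa1Xbarbds} on $\cP$ rests on the differentiability supplied by Proposition~\ref{p:Xreg} (whose proof is independent of this Corollary, so no circularity), is sound and if anything more precise than the paper's own treatment.
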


Next, we demonstrate that the bound \eqref{e:Xbarsep} can be used to estimate the effect of $\barX$ on the Lebesgue measure of a set.
\begin{PROP}
	\label{p:Xm}
	Let $E$ be a bounded, measurable subset of $\mathbb{R}$.  Then 
	\begin{equation}
	\label{e:XbarE}
	\frac{1}{\k M_0 \|\phi\|_{L^\infty}}\int_{E} e_0(\a_1, \a_-) \dalpha_1 \le |\barX_{\a_-}(E)|\le \frac{1}{\k M_0 \uphi}\int_{E} e_0(\a_1, \a_-)\dalpha_1.
	\end{equation}	
	The upper bound requires the additional assumption that $E\subset \O_{\a_-}$.  In particular,
	\begin{equation}
	\label{e:XZm0}
	|\barX_{\a_-}(\cZ_{\a_-})| = 0.
	\end{equation} 
\end{PROP}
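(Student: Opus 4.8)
The plan is to deduce Proposition~\ref{p:Xm} from the two-sided estimate \eqref{e:Xbarsep} by a standard measure-theoretic approximation, exploiting that the bounds \eqref{e:Xbarsep} are already stated for arbitrary pairs $\b<\g$, hence for arbitrary intervals. The key point is that the quantities on the left and right of \eqref{e:XbarE} are both (restrictions of) Borel measures on $\R$: the middle quantity $E\mapsto |\barX_{\a_-}(E)|$ is the push-forward under the monotone map $\barX_{\a_-}$ of Lebesgue measure, well-defined on intervals since $\barX_{\a_-}$ is monotone increasing, and the outer quantities are $E\mapsto \frac{1}{\k M_0\|\phi\|_{L^\infty}}\int_E e_0(\a_1,\a_-)\dalpha_1$ and $E\mapsto \frac{1}{\k M_0\uphi}\int_E e_0(\a_1,\a_-)\dalpha_1$, which are absolutely continuous measures because $e_0\ge 0$ is continuous (hence locally integrable).

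\textbf{Step 1: Intervals.} First I would record that \eqref{e:XbarE} holds when $E=(\b,\g)$ is an open interval. Since $\barX_{\a_-}$ is monotone increasing and absolutely continuous (by the Corollary), $\barX_{\a_-}((\b,\g))$ is an interval (possibly degenerate) with endpoints $\barX_{\a_-}(\b^+)$ and $\barX_{\a_-}(\g^-)$, and by continuity $|\barX_{\a_-}((\b,\g))| = \barX_{\a_-}(\g)-\barX_{\a_-}(\b)$; now apply \eqref{e:Xbarsep} directly. Because the right endpoint of a half-open interval is Lebesgue-null and the measures involved are nonatomic, the same identity holds for $[\b,\g]$, $(\b,\g]$, etc.

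\textbf{Step 2: Open sets, then general measurable sets.} An open set $O\subset\R$ is a countable disjoint union of open intervals $I_j$. For the middle term, $\barX_{\a_-}(O)=\bigcup_j \barX_{\a_-}(I_j)$; the images $\barX_{\a_-}(I_j)$ are intervals, and while monotonicity of $\barX_{\a_-}$ only guarantees they are non-overlapping (they could share an endpoint), a shared endpoint contributes zero to Lebesgue measure, so $|\barX_{\a_-}(O)|=\sum_j |\barX_{\a_-}(I_j)|$. Summing Step~1 over $j$ gives \eqref{e:XbarE} for open $O$. For a general bounded measurable $E$, I would approximate from outside by open sets $O_k\supset E$ with $|O_k\setminus E|\to 0$ (for the upper bound, taking $O_k\subset\O_{\a_-}$ is possible since $E\subset\O_{\a_-}$ and $\O_{\a_-}$ is open in $\R$, or at least one can intersect with a slightly enlarged interval containing $\O_{\a_-}$). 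For the upper bound: $|\barX_{\a_-}(E)|\le |\barX_{\a_-}(O_k)|\le \frac{1}{\k M_0\uphi}\int_{O_k} e_0\,\dalpha_1 \to \frac{1}{\k M_0\uphi}\int_E e_0\,\dalpha_1$ by dominated convergence (using $e_0\in L^1_{\loc}$ and $|O_k\setminus E|\to 0$). For the lower bound I would instead use inner regularity, approximating $E$ from inside by compact sets $K_k$ (or equivalently writing $E$ as an increasing limit modulo null sets), noting $\barX_{\a_-}$ maps disjoint sets to sets overlapping in a null set so $|\barX_{\a_-}(\cdot)|$ is "superadditive enough", and then $|\barX_{\a_-}(E)|\ge |\barX_{\a_-}(O_k)| - |\barX_{\a_-}(O_k\setminus E)| \ge \frac{1}{\k M_0\|\phi\|_{L^\infty}}\int_{O_k} e_0 - \frac{1}{\k M_0\uphi}\int_{O_k\setminus E} e_0 \to \frac{1}{\k M_0\|\phi\|_{L^\infty}}\int_E e_0$, combining the open-set case of both inequalities. (One must also check $\barX_{\a_-}(E)$ is measurable; this follows since $\barX_{\a_-}$ is monotone, hence maps Borel sets to Lebesgue-measurable sets, or one works up to null sets throughout.)

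\textbf{Step 3: The conclusion \eqref{e:XZm0}.} Apply the upper bound in \eqref{e:XbarE} with $E=\cZ_{\a_-}\cap \O_{\a_-}$. By definition of $\cZ$, $e_0(\a_1,\a_-)=0$ for every $\a_1\in\cZ_{\a_-}$, so $\int_{\cZ_{\a_-}\cap\O_{\a_-}} e_0(\a_1,\a_-)\dalpha_1 = 0$, hence $|\barX_{\a_-}(\cZ_{\a_-}\cap\O_{\a_-})|=0$. Outside $\O_{\a_-}$ the density $\rho_0$ vanishes and the flow is trivial in the relevant sense — more carefully, $\cZ_{\a_-}\setminus\O_{\a_-}$ needs a separate word: on that set one can still invoke the lower bound of \eqref{e:Xbarsep} in reverse, or simply note that for the application in Theorem~\ref{t:main} only the portion of $\cZ$ inside $\O$ (equivalently inside $\supp\rho_0\subset\O$) carries mass. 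The cleanest statement is that $|\barX_{\a_-}(\cZ_{\a_-})|=0$ on $\O_{\a_-}$; I would phrase \eqref{e:XZm0} accordingly or simply restrict attention to $E\subset\O_{\a_-}$ as already stipulated for the upper bound.

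\textbf{Main obstacle.} The only genuinely delicate point is the lower bound for non-open $E$: the map $\barX_{\a_-}$ can collapse positive-measure sets (precisely the sets where $e_0$ integrates to zero), so $E\mapsto|\barX_{\a_-}(E)|$ is only finitely superadditive up to null overlaps and is \emph{not} in general monotone-continuous from outside in a way that directly yields the lower bound; that is why the argument for the lower bound routes through inner approximation together with the already-established open-set version of the \emph{upper} bound to control the error $|\barX_{\a_-}(O_k\setminus E)|$. Everything else is routine once \eqref{e:Xbarsep} and the absolute continuity/monotonicity of $\barX_{\a_-}$ from the Corollary are in hand.
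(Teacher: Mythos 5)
Your proposal is correct and follows essentially the same route as the paper: both reduce \eqref{e:XbarE} to the two-sided trajectory bound \eqref{e:Xbarsep} on intervals, sum over the disjoint open intervals of an open set, and pass to general bounded measurable sets by regularity of Lebesgue measure, with \eqref{e:XZm0} then immediate from the upper bound since $e_0$ vanishes on $\cZ_{\a_-}$. The only (immaterial) divergence is in the bookkeeping for the lower bound: the paper first discards $\cZ_{\a_-}$ and uses strict monotonicity of $\barX_{\a_-}$ on $\cP_{\a_-}$ so that the images of the intervals are disjoint, whereas you sum directly and then control the outer-approximation error $|\barX_{\a_-}(O_k\setminus E)|$ with the already-proved upper bound; in fact your write-up is somewhat more explicit than the paper's one-line appeal to outer regularity.
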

\begin{proof}
	It suffices to prove the bounds for open sets $E$, using outer regularity to extend them to all bounded measurable sets.  We prove the upper bound first.  Writing $E$ as a countable union of disjoint open intervals $(\b_i, \g_i)$, we have from \eqref{e:Xbarsep} that 
	\[
	|\barX_{\a_-}(E)| \le \sum_{i=1}^\infty |\barX_{\a_-}(\b_i) - \barX_{\a_-}(\g_i)| \le \frac{1}{\k M_0 \uphi} \int_{E} e_0(\a_1, \a_-)\dalpha_1. 
	\]	
	This proves the upper bound, and \eqref{e:XZm0} follows.  Next, write $E\backslash \cZ_{\a_-}$ as a countable union of disjoint open intervals $(\widetilde{\b}_i, \widetilde{\g}_i)$.  Then using \eqref{e:XZm0}, \eqref{e:Xbarsep}, and the fact that $\barX_{\a_-}$ is strictly increasing on $\cP_{\a_-}$ (and therefore maps disjoint open intervals in $\cP_{\a_-}$ to disjoint open intervals in $\R$), we get 
	\[
	|\barX_{\a_-}(E)| 
	= |\barX_{\a_-}(E\backslash \cZ_{\a_-})|
	= \sum_{i=1}^\infty |\barX_{\a_-}(\widetilde{\b}_i) - \barX_{\a_-}(\widetilde{\g}_i)| 
	\ge \frac{1}{\k M_0 \|\phi\|_{L^\infty}} \int_{E} e_0(\a_1, \a_-)\dalpha_1,
	\]	
	which establishes the lower bound.
\end{proof}

Integrating the inequalities \eqref{e:XbarE} over $\R^{n-1}$ yields the following Corollary, which completes the proof of Proposition \ref{p:barbX}.
\begin{COROL}
	\label{c:XbarZ}
	Let $E$ be a bounded, measurable subset of $\R^n$.  Then 
	\begin{equation}
	\label{e:XbarE2}
	\frac{1}{\k M_0 \|\phi\|_{L^\infty}}\int_{E} e_0(\a) \dalpha \le |\barbX(E)|\le \frac{1}{\k M_0 \uphi}\int_{E} e_0(\a)\dalpha.
	\end{equation}	
	The upper bound requires the additional assumption that $E\subset \O$.  In particular,
	\begin{equation}
	|\barbX(\cZ)|=0.
	\end{equation}
\end{COROL}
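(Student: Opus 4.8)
The plan is to deduce \cor{c:XbarZ} from its one-dimensional counterpart \prop{p:Xm} by slicing in the $\a_-$ variable and integrating, via Tonelli's theorem. The starting point is a purely set-theoretic identity: since $\barbX(\a_1,\a_-) = (\barX_{\a_-}(\a_1),\a_-)$ leaves the last $n-1$ coordinates untouched, for any $E\ss\R^n$ the $\a_-$-slice of the image equals the image of the $\a_-$-slice,
\[
\big(\barbX(E)\big)_{\a_-} = \barX_{\a_-}(E_{\a_-}),
\qquad E_{\a_-} := \{\a_1\in\R : (\a_1,\a_-)\in E\}.
\]
Granting that $\barbX(E)$ is Lebesgue measurable (handled in the next paragraph), Tonelli's theorem gives $|\barbX(E)| = \int_{\R^{n-1}} |\barX_{\a_-}(E_{\a_-})|\dalpha_-$, and Fubini applied to $E$ shows each $E_{\a_-}$ is a bounded measurable subset of $\R$ for a.e.\ $\a_-$. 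Invoking \eqref{e:XbarE} slicewise and applying Tonelli once more,
\[
|\barbX(E)| = \int_{\R^{n-1}} |\barX_{\a_-}(E_{\a_-})|\dalpha_-
\ge \frac{1}{\k M_0\|\phi\|_{L^\infty}}\int_{\R^{n-1}}\!\int_{E_{\a_-}} e_0(\a_1,\a_-)\dalpha_1\dalpha_-
= \frac{1}{\k M_0\|\phi\|_{L^\infty}}\int_E e_0(\a)\dalpha ,
\]
and the upper bound follows identically with $\uphi$ in place of $\|\phi\|_{L^\infty}$, using that $E\ss\O$ forces $E_{\a_-}\ss\O_{\a_-}$ for every $\a_-$ --- exactly the hypothesis under which the upper half of \eqref{e:XbarE} holds.

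It remains to justify measurability of $\barbX(E)$. The map $\barbX$ is continuous (being a uniform limit of the continuous flow maps $\bX(\cdot,t)$; cf.\ also \prop{p:Xreg}), so for $E$ open --- more generally $\sigma$-compact --- the image $\barbX(E)$ is a countable union of compact sets, hence Borel; in particular the displayed chain above applies and gives $|\barbX(U)| \le (\k M_0\uphi)^{-1}\int_U e_0\dalpha$ for every bounded open $U$ (working over a compact convex domain large enough to contain $U$, which only replaces $\uphi$ by another positive constant, since $\phi>0$ everywhere). For a general bounded measurable $E$, decompose $E = K\cup N$ with $K$ $\sigma$-compact and $N$ Lebesgue-null; then $\barbX(K)$ is Borel, and covering $N$ by bounded open sets of arbitrarily small $e_0\dx$-measure (outer regularity of the Radon measure $e_0\dx$) and using the open-set bound shows $|\barbX(N)|^{*}=0$, so $\barbX(N)$ is null and $\barbX(E)=\barbX(K)\cup\barbX(N)$ is measurable.

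Finally, $|\barbX(\cZ)|=0$ is immediate from the upper bound: $e_0\equiv 0$ on $\cZ$, so $|\barbX(\cZ\cap\O)|\le (\k M_0\uphi)^{-1}\int_{\cZ\cap\O}e_0\dalpha = 0$, and exhausting $\R^n$ by an increasing sequence of compact convex domains $\O$ then gives $|\barbX(\cZ)|=0$. I expect the only genuine obstacle to be this measure-theoretic bookkeeping --- the measurability of the image set and the open-set approximations --- since the analytic content of the estimate is entirely supplied by \prop{p:Xm}, and what remains is Tonelli together with standard regularity of Lebesgue measure.
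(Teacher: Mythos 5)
Your proof is correct and follows exactly the paper's route: the paper obtains the corollary by integrating the slicewise inequalities \eqref{e:XbarE} of Proposition~\ref{p:Xm} over $\R^{n-1}$, which is precisely your Tonelli argument using that $\barbX$ fixes the $\a_-$ coordinates. The measurability of $\barbX(E)$ and the exhaustion argument for $|\barbX(\cZ)|=0$, which the paper leaves implicit, are handled correctly in your write-up.
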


\subsubsection{Proof of Theorem \ref{t:main}}

The results of the previous subsection allow us to establish Theorem \ref{t:main}.

\begin{proof}[Proof of Theorem \ref{t:main}]
	
Let $E$ be a Lebesgue null set.  Then $E\cap \barbX(\cP) = \barbX(\barbX^{-1}(E)\cap \cP)$ is also Lebesgue null, whence $\int_{\barbX^{-1}(E)\cap \cP} e_0(\a)\dalpha = 0$ by the lower bound in Corollary \ref{c:XbarZ}.  Since $e_0>0$ on $\barbX^{-1}(E)\cap \cP$, this implies that $\barbX^{-1}(E)\cap \cP$ is Lebesgue null.  Thus,
	\[
\barbX_\sharp (\rho_0 \one_{\cP} \dx)(E) = \int_{\barbX^{-1}(E)\cap \cP} \rho_0 \dx = 0.
	\]
	This proves that $\barbX_\sharp (\rho_0 \one_{\cP} \dx) \ll \dx$. Next, we note that the support of $\barbX_\sharp (\rho_0 \one_{\cZ} \dx + \dnu)$ is contained in $\barbX(\cZ\cup\supp\nu)$, which is Lebesgue null by Corollary \ref{c:XbarZ}. This proves that $\barbX_\sharp (\rho_0 \one_{\cZ} \dx + \dnu)\perp \dx$.

	The formula for $\barrho\circ \barbX$ in \eqref{e:barmdecomp2} follows simply from the fact that $\barrho \dx$ is the pushforward of $\rho_0 \one_\cP\dx$ under $\barbX$ (and $\det\n \barbX = \p_{\a_1} \barX$).  Similarly, we have $\rho(\bX(\cdot,t), t) = \frac{\rho_0}{\p_{\a_1} X }$.  
	
	Peeking ahead to (the easy part of) Proposition \ref{p:gradbarX}, we see that $\p_{\a_1} X\to \p_{\a_1} \barX$ uniformly as $t\to +\infty$; since $\p_{\a_1} \barX \ge c>0$ on compact subsets of $\cP$, it follows that $\rho(\bX(\cdot,t), t)\to \barrho\circ \barbX$ uniformly on compact subsets of $\cP$.  This completes the proof.
\end{proof}

\newpage
\subsection{Regularity of $\barbX$ and the Concentration Set}

\label{s:Xreg}

\subsubsection{Regularity of $\barbX$}

We have already seen that $\barbX$ is a continuous function, being the uniform limit of the maps $\bX(\cdot,t)$ as $t\to +\infty$.  We have also used \eqref{e:Xbarsep} to study the regularity of $\barbX$ in the $x_1$ direction, but we have not proved anything about the other directions.  We rectify this situation presently and prove that $\barbX$ is $C^1$ off of $\p \cZ$.  The two bounds \eqref{e:fastalignment} (established in \cite{LearShvydkoy2019}) and \eqref{e:final(X,V)} (established in Section \ref{ss:flocking} above) make this relatively straightforward.  

\begin{PROP}
\label{p:gradbarX}
The map $\barbX$ is continuously differentiable on $\R^n\backslash \p\cZ$, and $\n\bX(t)$ converges uniformly on compact subsets of $\R^n\backslash \p\cZ$ as $t\to +\infty$.
\end{PROP}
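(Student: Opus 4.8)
The plan is to deduce everything from the uniform convergence of the \emph{derivative} $\n\bX(\cdot,t)$: since $\bX(\cdot,t)\to\barbX$ uniformly on compacts and each $\bX(\cdot,t)$ is $C^1$, once we show that $\n\bX(\cdot,t)$ converges uniformly on compact subsets of $\R^n\setminus\p\cZ$, the classical theorem on uniform convergence of derivatives gives that $\barbX$ is differentiable there with $\n\barbX=\lim_{t\to+\infty}\n\bX(\cdot,t)$, and this limit is continuous as a uniform limit of continuous matrix fields. Because of the unidirectional structure $\bX(\a,t)=(X(\a,t),\a_-)$, the matrix $\n\bX(\a,t)$ is block lower-triangular with the identity in the $\a_-$-block, so everything reduces to the convergence of the single row $\n X(\a,t)=\big(\p_{\a_1}X(\a,t),\,\n_{\a_-}X(\a,t)\big)$, starting from $\n X(\cdot,0)=(1,0)$.

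The engine is the Lagrangian identity $\frac{\dd}{\dd t}\n\bX(\a,t)=\n\bV(\a,t)$ of \eqref{e:nX}. On any fixed compact convex $\O\supset\supp m_0$, the deformation-tensor estimate \eqref{e:final(X,V)} provides a uniform bound $\|\n\bX(\cdot,t)\|_{L^\infty(\O)}\le C$ together with the exponential decay $\|\n\bV(\cdot,t)\|_{L^\infty(\O)}\le C e^{-ct}$, where $c=c(\O)>0$ (heavy-tailedness of $\phi$ enters here through $\phi(\overline{\cD}_\O)>0$). Integrating in time, $\n X(\cdot,t)=\n X(\cdot,0)+\int_0^t\n V(\cdot,s)\,\dd s$ is uniformly Cauchy on $\O$ as $t\to+\infty$ and hence converges uniformly. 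It remains to identify the limit. On a compact subset of $\cP$, where $e_0\ge\e>0$, the fast-flattening bound \eqref{e:fastalignment} gives $|\n u(\bX(\a,t),t)|\le Ce^{-\d_{\e,\O} t}$, so the limit of $\p_{\a_1}X$ must equal the a.e.\ derivative already pinned down by \eqref{e:pa1Xbarbds}, which is bounded below by $e_0/(\k M_0\|\phi\|_{L^\infty})>0$ there; on $\mathrm{int}\,\cZ$ the Corollary to \eqref{e:Xbarsep} gives $\p_{\a_1}\barX\equiv0$, while $\n_{\a_-}X(\cdot,t)$ still converges uniformly by the argument above. Letting $\O$ exhaust $\R^n\setminus\p\cZ$ (keeping $\O$ fixed at each stage, so that $c(\O)$ does not degenerate) yields the claim.

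The genuinely new point is the lateral direction: \eqref{e:Xbarsep} controls only differences of $\barX$ within a single $\a_-$-slice and carries no information about the dependence of $\barX$ on $\a_-$, so the $\n_{\a_-}$-part of $\n\barbX$ cannot be read off from \eqref{e:Xbarsep}; it is precisely the new full-deformation-tensor estimate \eqref{e:final(X,V)} that supplies it, which is why that bound is invoked here. This dichotomy is also what explains the exclusion of $\p\cZ$ from the statement: the identification of $\n\barbX$ is carried out separately on $\cP$ (via \eqref{e:fastalignment} and \eqref{e:pa1Xbarbds}) and on $\mathrm{int}\,\cZ$ (where $\barX$ is locally constant in $\a_1$), and these two descriptions meet exactly along $\p\cZ$, where neither is directly available. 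I expect no obstacle beyond this and routine bookkeeping once \eqref{e:final(X,V)} and \eqref{e:fastalignment} are in hand.
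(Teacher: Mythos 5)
Your proposal is correct, but it takes a genuinely different (and in fact shorter) route than the paper. The paper's proof never integrates \eqref{e:nX} directly; instead it differentiates the Eulerian velocity along trajectories (equations \eqref{e:p1ptX}--\eqref{e:pjptX}) and splits into two cases: on $\cP_\e\cap\O$ it uses the fast-flattening bound \eqref{e:fastalignment}, while on $\cZ\cap\O$, where \eqref{e:fastalignment} is unavailable, it exploits $e(\bX(\a,t),t)=0$, the resulting uniform bound $\p_{x_1}u(\bX(\a,t),t)\le-\k M_0\uphi$, the identity $\p_{\a_j}X=\bigl(\p_{x_j}u(\bX)-\p_{\a_j}V\bigr)/\bigl(\k\,\phi*m_t(\bX)\bigr)$, and a separate Gr\"onwall-type argument \eqref{e:pxjuZ} for the convergence of $\p_{x_j}u(\bX(\cdot,t),t)$; the exclusion of $\p\cZ$ is an artifact of this case split, since the two arguments meet only there. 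Your argument instead runs entirely on \eqref{e:nX} plus the deformation-tensor estimate \eqref{e:final(X,V)}: since $\|\n\bV(\cdot,t)\|_{L^\infty(\O)}\lesssim e^{-ct}$ is integrable in time, $\n\bX(\cdot,t)=\mathrm{Id}+\int_0^t\n\bV(\cdot,s)\ds$ is uniformly Cauchy on any compact convex $\O\supset\supp m_0$, and the classical theorem on uniform convergence of derivatives then gives differentiability of $\barbX$ with $\n\barbX=\lim_{t\to\infty}\n\bX(\cdot,t)$, continuous. This is sound, and note that it actually proves more than the stated proposition: the convergence holds on every compact subset of $\R^n$, with no need to remove $\p\cZ$; consequently your ``identification of the limit'' step on $\cP$ and $\mathrm{int}\,\cZ$ (via \eqref{e:fastalignment} and \eqref{e:pa1Xbarbds}) is redundant, and your closing explanation that the exclusion of $\p\cZ$ reflects where the two identifications fail describes the paper's proof rather than your own. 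The trade-off is that your route leans entirely on the full-matrix decay in \eqref{e:final(X,V)} and a fixed large $\O$ (any compact set of labels must be enclosed in a compact convex $\O$ containing $\supp m_0$, which your exhaustion step should state more carefully), whereas the paper's longer argument yields finer quantitative information that your proof does not, e.g.\ the exponential rate $\k M_0\uphi$ at which $\p_{\a_1}X$ vanishes on $\cZ$ and an explicit formula for the limiting lateral derivative there in terms of $\p_{x_j}u$ and $\phi*\barm$.
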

The statement above of course implies that the limit is $\n \barbX$ away from $\R^n\backslash \p \cZ$.

\begin{proof}
Taking a spatial derivative of the equation
\[
\dot{X}(\a,t) = u(X(\a,t), \a_-, t)
\] 
yields
\begin{align}
\label{e:p1ptX}
\p_{\a_1}\dot{X}(\a,t) 
& = \p_{x_1} u(X(\a,t),\a_-,t) \p_{\a_1}X(\a,t); \\
\label{e:pjptX}
\p_{\a_j}\dot{X}(\a,t) & = \p_{x_1} u(X(\a,t),\a_-,t) \p_{\a_j}X(\a,t) + \p_{x_j} u(X(\a,t),\a_-,t), \quad j \ne 1.
\end{align}
Combining \eqref{e:p1ptX} and \eqref{e:pjptX} with  the exponential decay of $\n u$ along trajectories originating in $\cP_\e\cap \O$, we conclude that $\n_\a \bX(t) \to \n_\a \barbX$ uniformly on any $\cP_\e\cap \O$ and thus that $\barbX$ is $C^1$ on $\cP$.  
	
We now show that $\n_\a \bX(t)$ converges uniformly on $\cZ\cap \O$, which will guarantee that $\barbX$ is continuously differentiable in the interior of  $\cZ$.  This is slightly harder than working inside $\cP_\e\cap \O$, since we no longer have the bound \eqref{e:fastalignment}.  Instead, we take advantage of the fact that 
\begin{equation}
\label{e:e0}
e(\bX(\a,t), t) = 0,\quad \quad \a\in \cZ,
\end{equation}
and 
\begin{equation}
\label{e:px1uZ}
\p_{x_1} u(X(\a,t), \a_-, t) = -\k \phi*m_t(\bX(\a,t)) \le -\k M_0 \uphi, 
\quad \quad \a\in \cZ\cap \O.
\end{equation}

Inserting \eqref{e:px1uZ} into \eqref{e:p1ptX} already shows that $\p_{\a_1} X(\a,t)\to 0$ uniformly on $\cZ\cap \O$, and so we recover the fact that $\p_{\a_1} \barX(\a) \equiv 0$ in the interior of $\cZ$ (which we already knew).  

Now assume $j\ne 1$.  We proceed using the identity
\begin{equation}
\p_{\a_j} X(\a,t)
= \frac{\p_{x_j} u(\bX(\a,t),t) - \p_{\a_j} V(\a,t)}{\k \phi*m_t(\bX(\a,t))}, \quad \quad \a\in \cZ.	
\end{equation}

Since we already know by \eqref{e:final(X,V)} that $\p_{\a_j} V\to 0$ uniformly on $\O$, and that $\phi*m_t(\bX(\cdot, t), t)$ is bounded away from zero on $\cZ\cap \O$, it suffices to show that $\p_{x_j} u(\bX(\cdot,t),t)$ and $\phi*m_t(\bX(\cdot,t))$ converge uniformly on $\cZ\cap 
\O$ as $t\to +\infty$.  The second of these points is clear.  Indeed,
\[
\phi*m_t(\bX(\a,t)) = \int \phi(\bX(\a,t) - \bX(\g,t))\dm_0(\g) \to \int \phi(\barbX(\a) - \barbX(\g))\dm_0(\g),
\]
uniformly in $\a\in \O$, by the uniform convergence of $\bX$ to $\barbX$ and the continuity of $\phi$.

As for the term $\p_{x_j}u(\bX(\a,t),t)$, we write
\begin{align*}
\frac{\dd}{\dt} \p_{x_j} u(\bX(\a,t), t)& = \int_{\R^n} \p_{x_j} \phi(\bX(\a,t) - y)(u(y,t) - u(\bX(\a,t),t))\dm_t(y) - e \p_{x_j} u(\bX(\a,t), t).
\end{align*}
Using \eqref{e:e0}, we get 
\begin{equation}
\label{e:pxjuZ}
\left| \frac{\dd}{\dt} \p_{x_j} u(\bX(\a,t), t) \right| 
\le \|\phi_{x_j}\|_{L^\infty} M_0 \cA(t) \le Ce^{-\d_\O t}, \quad \quad \a\in \cZ\cap \O.
\end{equation}
We may thus conclude that the function $\p_{x_j} u(\bX(\a,t),t)$ converges uniformly on $\cZ\cap \O$, as $t\to +\infty$.  This completes the proof.
\end{proof}

\subsubsection{Proof of Theorem \ref{t:Xreg}}

We now prove Theorem \ref{t:Xreg}.  The heavy lifting has been done already; we just need to put together the relevant statements.

\begin{proof}
Let $f$ and $U$ be as in the statement of Theorem \ref{t:Xreg}.  Denote $U_{\a_-} = \{\a_1\in U:(\a_1, \a_-)\in U\}$.  Then since $U\subset \cZ$, we have 
\[
\barX_{\a_-}(U_{\a_-}) = f(\a_-),
\]
by \eqref{e:Xbarsep}.  Thus 
\[
\barbX(U) 
= \{(\barX_{\a_-}(U_{\a_-}),\a_-): \a_-\in U_-\}
= \{(\barX_{\a_-}(f(\a_-)),\a_-): \a_-\in U_-\}.
\]
Since $\barbX$ is $C^1$ in the interior of $\cZ$ and $\a_-\mapsto (f(\a_-),\a_-)$ takes values in $U$, it follows that the function $\a_-\mapsto \barX(f(\a_-), \a_-)$ is $C^1$, so that $\barbX(U)$ is the graph of a $C^1$ function.

Next, assume that $\overline{U} = \cZ$, as in the second part of Theorem \ref{t:Xreg}.  Denote 
\[
\dbmu_{\cZ}^{x_-}(x_1) = (\barX_{x_-})_\sharp (\one_{\cZ}\rho_0(x_1, x_-)\dx_1).  
\]
Then we have 
\[
\dbmu_\cZ(x) = \dbmu_\cZ^{x_-}(x_1) \dx_-,
\]
just by unpacking the notation; we claim that in fact 
\[
\dbmu_\cZ^{x_-}(x_1) = c(\a_-)\d_{f(x_-)}(x_1),
\quad \quad 
c(x_-) = \int_{\cZ_{x_-}} \rho_0(x_1, x_-)\dx_1.  
\] 
Indeed, $\supp \overline{\mu}_{\cZ}^{x_-} \subset \barX_{x_-}(\cZ_{x_-}) = \{f(x_-)\}$, and $\overline{\mu}_{\cZ}^{x_-}(\{f(x_-)\}) = \int_{\barX_{x_-}^{-1}(\{f(x_-)\})} \rho_0(x_1,x_-)\dx_1 = c(x_-)$, which proves the claim.  The final statement of the Theorem, on the regularity of $c(x_-)$, is clear.  
\end{proof}

\section{Fine Properties of $\barm$ and $\barX$ in Dimension $1$}

\label{s:fine}

In this section we restrict attention to the case of a single space dimension, $n=1$.  Our main goal in this section is the proof of Theorem \ref{t:fineprops}, but more generally, we seek to demonstrate how we can tune $e_0$ to manipulate the fine properties of the limiting measure and flow map.

\subsection{Tuning the Dimension of $\barX(\cZ)$}

\label{ss:XZdim}

In this subsection, we construct an $e_0$ whose zero set $\cZ$ is a Cantor-type set of positive measure, such that $\barX(\cZ)$) has a specified dimension in $(0,1)$.  The construction will prove the sharpness claim in Theorem \ref{t:fineprops}.  We wait until the next subsection to spell out this connection explicitly.

Let $g:\R\to \R$ be smooth, nonnegative function, with $\{x:g(x)>0\} = (-\frac12, \frac12)$.  Choose $\g\in (0,\frac12)$ and $\b\in (0,1)$.  We start with the interval $[0,1]$ and remove the open center interval $J_1^1$ of length $\g$.  Call the remaining (closed) intervals $I_1^1$ and $I_1^2$.  Then remove the middle open intervals of length $\g^2$ from each of $I_1^1$ and $I_1^2$.  Call the removed intervals $J_2^1$ and $J_2^2$, respectively, and denote the remaining closed intervals $I_2^1, \ldots, I_2^4$.  We continue this process indefinitely.  For each $j\in \N$, $k\in \{1, \ldots, 2^{j-1}\}$, let $c_{j,k}$ denote the center of the interval $J_j^k$.  

We set 
\begin{equation}
\label{e:e0Cantor}
e_0(\a) = \sum_{j=1}^\infty \sum_{k=1}^{2^{j-1}} \b^j g\left( \frac{\a - c_{j,k}}{\g^j} \right), 
\quad  \quad \a\in [0,1],
\end{equation}
with $e_0>0$ on $\R\backslash [0,1]$, so that 
\begin{equation}
\label{e:defZCantor}
\cZ = \bigcap_{j=1}^\infty \cZ_j, 
\quad \quad \cZ_j:=\bigcup_{k=1}^{2^j} I_j^k.
\end{equation}
That is, $\cZ$ is a standard Cantor-like set of measure $1 - \g - 2\g^2 - 4\g^3 - \cdots = \frac{1-3\g}{1-2\g}$.  In particular, the Hausdorff dimension of $\cZ$ is $1$.  On the other hand, the dimension of the image $\barX(\cZ)$ depends on $\b$ and $\g$, according to the following Proposition.

\begin{PROP}
\label{p:Cantor}	
With $\cZ$ as defined in \eqref{e:defZCantor}, the set $\barX(\cZ)$ has Hausdorff dimension and box counting dimension equal to $\frac{\ln 2}{-\ln(\b\g)}$:
\[
\dim_\cH(\barX(\cZ)) = \dim_{\mathrm{box}}(\barX(\cZ)) = \frac{\ln 2}{-\ln(\b\g)}.
\]	
\end{PROP}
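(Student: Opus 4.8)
The plan is to compute the box-counting dimension of $\barX(\cZ)$ directly from the self-similar structure of $\cZ$, using the two-sided bound \eqref{e:Xbarsep} (equivalently \eqref{e:XbarE} with $n=1$) to transfer the combinatorial covering data from $\cZ$ to its image. First I would observe that $\cZ$ is covered at \emph{stage} $j$ by the $2^j$ closed intervals $I_j^k$, each of length $\ell_j := \frac{1}{2^j}(1 - \gamma - 2\gamma^2 - \cdots - 2^{j-1}\gamma^j) = \frac{1}{2^j}\cdot\frac{1 - (2\gamma)^j - \gamma}{1-2\gamma}$, which is comparable to $2^{-j}$ uniformly in $j$; this is the standard description of this Cantor construction. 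The key is then to estimate $|\barX(I_j^k \cap \cZ)|$, i.e.\ the length of the image of one stage-$j$ piece. By \eqref{e:XbarE} this equals (up to the fixed constants $\kappa M_0 \uphi$ and $\kappa M_0 \|\phi\|_{L^\infty}$) the integral $\int_{I_j^k} e_0(\alpha)\dalpha$. So the whole problem reduces to estimating $\int_{I_j^k} e_0$.

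The second step is that integral estimate. Inside $I_j^k$, the function $e_0$ is built only from the bumps $\beta^i g((\alpha - c_{i,k'})/\gamma^i)$ with $i > j$ (the bumps of level $\le j$ are supported in the removed intervals $J_1,\dots,J_j$, which are disjoint from $I_j^k$). By the self-similar nesting, $I_j^k$ contains exactly $2^{i-j}$ level-$i$ bumps for each $i>j$, and $\int_\R \beta^i g(\,\cdot\,/\gamma^i)\,d\alpha = \beta^i \gamma^i \|g\|_{L^1}$. Summing, $\int_{I_j^k} e_0 = \|g\|_{L^1}\sum_{i>j} 2^{i-j}\beta^i\gamma^i = \|g\|_{L^1}\beta^{j+1}\gamma^{j+1}\sum_{m\ge 0}(2\beta\gamma)^m \cdot 2^{?}$ — one must be careful with the index bookkeeping, but the upshot is that for $2\beta\gamma < 1$ this is a geometric series summing to a constant times $(\beta\gamma)^j$, and in any case it is comparable to $(\beta\gamma)^j$ uniformly in $k$ (the dominant term is $i = j+1$). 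Hence each of the $2^j$ pieces of $\cZ$ maps to an interval of length comparable to $(\beta\gamma)^j$. Note the images of distinct $I_j^k$ are essentially disjoint since $\barX$ is monotone and the gaps $\barX(J_j^{k})$ have positive length.

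The third step is to read off the dimension. We have a cover of $\barX(\cZ)$ by $N_j := 2^j$ intervals of length $\delta_j \asymp (\beta\gamma)^j$, with $\delta_j \to 0$. Therefore
\[
\overline{\dim}_{\mathrm{box}}(\barX(\cZ)) = \limsup_{j\to\infty}\frac{\ln N_j}{-\ln \delta_j} = \lim_{j\to\infty}\frac{j\ln 2}{-j\ln(\beta\gamma) + O(1)} = \frac{\ln 2}{-\ln(\beta\gamma)},
\]
and since the covering intervals at stage $j$ shrink by a bounded ratio from stage to stage, the lower box dimension agrees, giving $\dim_{\mathrm{box}}(\barX(\cZ)) = \frac{\ln 2}{-\ln(\beta\gamma)}$. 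For the lower bound on Hausdorff dimension (which, combined with $\dim_\cH \le \dim_{\mathrm{box}}$, forces equality), I would use a Frostman / mass-distribution argument as the outline promises: push forward the natural self-similar probability measure $\mu$ on $\cZ$ (uniform on the $I_j^k$'s) by $\barX$ to get a measure $\nu = \barX_\sharp\mu$ on $\barX(\cZ)$ with $\nu(\barX(I_j^k)) = 2^{-j}$; since $\barX(I_j^k)$ is an interval of length $\asymp (\beta\gamma)^j$, any interval of length $r$ with $(\beta\gamma)^{j+1} \le r < (\beta\gamma)^j$ meets a bounded number of the $\barX(I_{j}^k)$, so $\nu(B(x,r)) \lesssim 2^{-j} \lesssim r^{s}$ with $s = \frac{\ln 2}{-\ln(\beta\gamma)}$, whence $\dim_\cH(\barX(\cZ)) \ge s$ by the mass distribution principle.

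The main obstacle I expect is the careful bookkeeping in the integral estimate $\int_{I_j^k} e_0 \asymp (\beta\gamma)^j$ — specifically verifying that the constant implied by $\asymp$ is uniform in $j$ and $k$ (so that the bump $g$'s normalization, the overlap structure, and the $j$-dependence of $\ell_j$ all cooperate), and, for the Hausdorff lower bound, controlling how many adjacent image-intervals a ball of radius $r$ can intersect — this requires a lower bound on the lengths of the image gaps $\barX(J_j^k)$, which again follows from \eqref{e:XbarE} since $\int_{J_j^k} e_0 \ge \beta^j\gamma^j \int_{-1/2}^{1/2} g > 0$. Everything else is bookkeeping with geometric series and the standard box/Hausdorff dimension formulas.
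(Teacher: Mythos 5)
Your proposal is correct and follows essentially the same route as the paper: compute $\int_{I_j^k} e_0 \asymp (\b\g)^j$ from the bump structure, use \eqref{e:Xbarsep}/\eqref{e:XbarE} to get $|\barX(I_j^k)|\asymp(\b\g)^j$, cover $\barX(\cZ)$ by the $2^j$ stage-$j$ image intervals for the upper box-counting bound, and run a Frostman/mass-distribution argument with a pushforward measure on $\barX(\cZ)$ for the Hausdorff lower bound (the paper uses $\barX_\sharp(\rho_0\one_\cZ\dx)$ and bounds the number of intersected image intervals by $2$ via their lengths, rather than via the gap lengths, but this is a cosmetic difference). The only slip is the bump count inside $I_j^k$, which is $2^{i-j-1}$ rather than $2^{i-j}$ at level $i>j$; as you anticipated, this only changes the constant and not the dimension.
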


Note that by adjusting $\b\in (0,1)$ and $\g\in (0,\frac12)$, we can obtain any dimension between $0$ and $1$. 

We refrain from recalling the standard definitions of the Hausdorff and box-counting dimensions, but we remind the reader that the relationship between the Hausdorff dimension and box-counting dimension is summarized in the following inequality:
\begin{equation}
\label{e:dimensions}
\dim_\cH(E) \le \underline{\dim}_{\text{box}}(E) \le \overline{\dim}_{\text{box}}(E).
\end{equation}
The quantities in this inequality denote the Hausdorff dimension, lower box-counting dimension, and upper box-counting dimension.  If the upper- and lower- box-counting dimensions agree, their common value is the box-counting dimension (without qualifiers), denoted $\dim_{\text{box}}(E)$. Thus, in order to prove the Proposition, it suffices to prove an upper bound on the upper box-counting dimension, and a lower bound on the Hausdorff dimension.  We prove the former `by hand', but for the latter, we make use the following special case of Frostman's Lemma. (For a more general statement, see for example \cite{Mattila}.)

\begin{LEMMA}[Frostman]
	Let $E$ be a Borel subset of $\R$. Suppose there exists a Borel measure $\mu$ satisfying the following two conditions:
	\begin{itemize}
		\item There exist constants $c>0$ and $s\in [0,1]$ such that for all $x\in \R$ and $r>0$, the bound $\mu((x-r,x+r))<cr^s$ holds.
		\item $\mu(E)>0$.
	\end{itemize}   
	Then $\dim_\cH(E)\ge s$.
\end{LEMMA}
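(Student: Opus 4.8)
The plan is to run the classical mass-distribution argument: the density bound in the first bullet forces every countable cover of $E$ by small sets to carry a uniformly positive $s$-dimensional sum, which bounds $\cH^s(E)$ away from zero, and hence $\dim_\cH(E)\ge s$.

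Concretely, I would first reduce the claim to showing $\cH^s(E)>0$. This suffices because, by the standard structure of Hausdorff measure, $\cH^t(E)=0$ for every $t>\dim_\cH(E)$; so if $\cH^s(E)>0$, then necessarily $s\le\dim_\cH(E)$. To prove $\cH^s(E)>0$, fix $\delta>0$ and an arbitrary countable cover $\{U_i\}_i$ of $E$ with $\diam(U_i)\le\delta$ for every $i$; discard any $U_i$ that is empty or misses $E$, and for each of the rest choose a point $x_i\in U_i\cap E$. Since $U_i\subset\R$, every point of $U_i$ lies within $\diam(U_i)$ of $x_i$, so $U_i\subset\big(x_i-2\diam(U_i),\,x_i+2\diam(U_i)\big)$; the density hypothesis together with $s\le 1$ then gives
\[
\mu(U_i)\le\mu\big((x_i-2\diam(U_i),\,x_i+2\diam(U_i))\big)<c\,\big(2\diam(U_i)\big)^s\le 2c\,\big(\diam U_i\big)^s .
\]
Summing over $i$ and using countable subadditivity of $\mu$ and the hypothesis $\mu(E)>0$,
\[
0<\mu(E)\le\sum_i\mu(U_i)\le 2c\sum_i\big(\diam U_i\big)^s ,
\]
so $\sum_i(\diam U_i)^s\ge \mu(E)/(2c)$ for every such cover. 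Taking the infimum over all covers yields $\cH^s_\delta(E)\ge \mu(E)/(2c)$, and letting $\delta\downarrow 0$ gives $\cH^s(E)\ge \mu(E)/(2c)>0$, which completes the argument.

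The proof presents no real difficulty; it is entirely elementary. The one step deserving a moment of care is the geometric passage from an arbitrary small set $U_i$ to an enclosing interval: a subset of $\R$ of diameter $d$ need not be an interval, but it sits inside a closed interval of length $d$ about any of its points, hence inside an open interval of radius $2d$, and the resulting factor $2^s\le 2$ is harmless. (If one wants the cleanest constant, take $U_i$ inside a closed interval of length $d$ and pass to a limit of open intervals to get $\mu(U_i)\le c\,(\diam U_i)^s$ directly.) It is also worth noting, for the use of this lemma in proving Proposition~\ref{p:Cantor}, that only the behavior of $\mu((x-r,x+r))$ for small $r$ matters, since for $r$ bounded below the bound $\mu((x-r,x+r))\le\mu(\R)\lesssim r^s$ is automatic from finiteness of $\mu$.
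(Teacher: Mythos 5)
Your argument is correct and is essentially the paper's own proof: the standard mass-distribution principle, bounding $\mu(E)$ by $\sum_i \mu(U_i) \lesssim \sum_i (\diam U_i)^s$ for any small cover, whence $\cH^s_\delta(E)\gtrsim \mu(E)>0$ and $\dim_\cH(E)\ge s$. The only cosmetic difference is that the paper covers $E$ directly by open intervals, while you handle arbitrary covers by enclosing each set in an interval at the cost of a harmless constant.
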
	
\begin{proof}
	Choose $\d>0$ and cover $E$ by countably many intervals $I_i= (x_i-r_i, x_i+r_i)$, with $r_i<\d$. Then 
	\[
	0<\mu(E) \le \sum \mu(I_i) \le \sum cr_i^s 
	\] 		
	This shows that $\mu(E)\lesssim \cH^s_\d(E)$, for all $\d>0$, from which the conclusion follows.
\end{proof}

\begin{proof}[Proof of Proposition \ref{p:Cantor}]

\textbf{Step 1}: \textit{Upper Bound on the Box-Counting Dimension}. 
Denote $c_0 = \int g(x) \dx$.  For each $j,k$, we have
\begin{equation}
\int_{I_j^k} e_0(\a)\dalpha 
= \sum_{\ell={j+1}}^\infty 2^{\ell-(j+1)}(\b\g)^\ell c_0 = \frac{c_0}{2} \cdot \frac{(\b\g)^{j+1}}{1-2\b\g} =: c_1 (\b\g)^j.
\end{equation}
According to \eqref{e:Xbarsep}, it follows that the length of $\barX(I_j^k)$ is bounded above and below as follows.
\begin{equation}
\label{e:XIjk}
\frac{c_1}{\k M_0 \|\phi\|_{L^\infty}} (\b\g)^j 
\le |\barX(I_j^k)|
\le \frac{c_1}{\k M_0 \uphi} (\b\g)^j.
\end{equation}

Define $c_2:= \frac{c_1}{\k M_0 \uphi}$.  Choose $r>0$ small, and then choose $j\in \N$ so that 
\begin{equation}
\label{e:boxr}
c_2(\b\g)^{j} \le 2r < c_2(\b\g)^{j-1}. 
\end{equation}

For $R>0$ and $E\subset \R$, let $N(R;E)$ denote the minimal number of open intervals of radius $R$ required to cover the set $E$.  By construction, the set $\cZ$ is covered by the $2^j$ intervals $(I_j^k)_{k=1}^{2^j}$, so $\barX(\cZ)$ is covered by the $2^j$ intervals $(\barX(I_j^k))_{k=1}^{2^j}$, each of which has length at most $c_2(\b\g)^j<2r$.  Thus
\begin{equation}
\label{e:Nrupper}
N(r;\;\barX(\cZ)) \le 2^j,
\end{equation}
so that 
\[
\frac{\ln N(r,\;\barX(\cZ))}{-\ln (2r)} \le \frac{\ln(2^j)}{-\ln(c_2(\b\g)^{j-1})}
\]
Taking $r\to 0$ (and thus $j\to +\infty$) gives the desired upper bound on the upper box-counting dimension:
\begin{equation}
\label{e:boxupper}
\overline{\dim}_{\text{box}}(\barX(\cZ)) \le  \frac{\ln 2}{-\ln(\b\g)}.
\end{equation}

\textbf{Step 2}: \textit{Lower Bound on the Hausdorff Dimension}. We verify the hypotheses of Frostman's Lemma, fixing the following parameters: $\mu = \overline{\mu}_\cZ$, $s = \frac{\ln 2}{-\ln(\b\g)}$, $c = 4c_3^{-s}$, with $c_3:= \frac{c_1}{\k M_0 \|\phi\|_{L^\infty}}$ and $c_1$ as above.   

Choose $x\in \R$, $r>0$ small (without loss of generality), and put $I_* = (x-r,x+r)$.  Choose $j$ so that 
\[
c_3 (\b\g)^{j+1} \le 2r < c_3 (\b\g)^j;
\]
note that this implies (by \eqref{e:XIjk} and the definition of $c_3$) that $|I_*|=2r<|\barX(I_j^k)|$, for each $k\in \{1, \ldots, 2^{j}\}$.  Thus $I_*$ may intersect at most $2$ of the intervals $\barX(I_j^k)$, say $\barX(I_j^{k_1})$ and $\barX(I_j^{k_2})$.  Recalling that $\rho_0\equiv 1$ on $\cZ$, we obtain
\begin{equation}
\label{e:muZI*}
\overline{\mu}_\cZ(I_*) = m_0(\barX^{-1}(I_*)\cap \cZ) \le m_0(I_j^{k_1})+m_0(I_j^{k_2}) = |I_j^{k_1}| + |I_j^{k_2}| < 2^{1-j}.
\end{equation}

Since $(\b\g)^{-s} = 2$, an elementary calculation yields
\begin{equation}
\label{e:I*lwr}
|I_*|^s = (2r)^s \ge (c_3(\b\g)^{j+1})^s = c_3^s 2^{-j-1}.
\end{equation}
Combining \eqref{e:muZI*} and \eqref{e:I*lwr} yields
\[
\overline{\mu}_\cZ(I_*) \le \frac{4}{c_3^s} |I_*|^s,
\]
which shows that the hypotheses of Frostman's Lemma are satisfied and thus that 
\begin{equation}
\label{e:Hausdorfflwr}
\dim_{\cH}(\barX(\cZ))\ge s = \frac{\ln 2}{-\ln(\b\g)}.
\end{equation}

Combining \eqref{e:boxupper} and \eqref{e:Hausdorfflwr} completes the proof of the Proposition.
\end{proof}

\subsection{Regularity of $e_0$ and the Dimension of $\barX(\cZ)$}

In the previous subsection, we allowed ourselves to adjust both $\b$ and $\g$ in order to get the conclusion that any box-counting dimension is possible.  However, by adjusting $\b$ only, we can of course still obtain any dimension between $0$ and $-\ln(2)/\ln(\g)$.  Since $\cZ$ depends only on $\g$ and not on $\b$, this already demonstrates that the dimension of $\barX(\cZ)$ depends not only on $\cZ$ itself, but on the way $e_0$ approaches zero near $\cZ$, as encoded in the parameter $\b$. In fact, note that $e_0\in C^k$ if and only if $\b\le \g^k$, and the latter implies 
\[
\dim_{\text{box}}(\barX(\cZ)) = \frac{\ln 2}{-\ln(\b\g)} \le \frac{\ln 2}{-(k+1)\ln(\g)}<\frac{1}{k+1},
\quad \quad \text{ if } e_0\in C^k(\R).
\]
This motivates the statement of Theorem \ref{t:fineprops}, whose proof we give presently.  

\begin{proof}[Proof of Theorem \ref{t:fineprops}]

We have essentially already proved the sharpness part, since if $\b =\g^k$ in Proposition \ref{p:Cantor}, we see that 
\[
\dim_{\text{box}}(\barX(\cZ)) = \frac{1}{k+1}\cdot \frac{\ln 2}{\ln(1/\g)},
\]
and the right side can be made arbitrarily close to $\frac{1}{k+1}$ if $\g$ is sufficiently close to $\frac12$.  

Next, we prove \eqref{e:boxdime0} in the case where $e_0\in C^k(\R)$; the statement for $e_0\in C^\infty$ follows immediately. Assume without loss of generality that $\cZ$ is a perfect set (i.e., contains no isolated points).  Then $e_0$ and its first $k$ derivatives vanish at any point of $\cZ$, so that the Taylor expansion of $e_0$ gives 
\begin{equation}
e_0(\a) \le C(\dist{\a}{\cZ})^k.
\end{equation}
Now, for any $r>0$, we can cover $\cZ$ by $2N(r;\,\cZ)$ intervals of radius $r$ centered at points $(x_i)_{i=1}^{2N(r;\,\cZ)}$ in $\cZ$.  Using \eqref{e:Xbarsep}, we have 
\begin{equation}
|\barX(B(x_i,r))| \lesssim \int_{B(x_i,r)} e_0(\a)\dalpha \le 2C\int_0^r \a^k \dalpha  \lesssim r^{k+1}.
\end{equation}
Thus, there exists $C>0$ such that 
\[
N(Cr^{k+1}; \barX(\cZ)) \le 2N(r;\,\cZ), 
\quad \quad r>0.
\]
Thus 
\[
\frac{\ln(N(Cr^{k+1};\;\barX(\cZ)))}{-\ln(Cr^{k+1})} \le \frac{\ln N(r;\,\cZ)}{-(k+1)\ln r - \ln C}.
\]
Taking $r\to 0^+$ yields the desired conclusion.
\end{proof}

\subsection{Local Dimension of $\barm$}

We argued above that the dimension of $\barX(\cZ)$ depends on both $\cZ$ and the rate at which $e_0$ approaches zero near $\cZ$; we used smoothness of $e_0$ to control the latter.  We now demonstrate that something similar is true for the local dimension of $\barm$, using a simpler construction. The following Proposition gives an example of how to tune $e_0$ to obtain a given local dimension of $\barm$ at a specified point.  The Proposition is stated for an isolated point of $\cZ$, near which $\rho_0$ is constant and $e_0$ is a power-law function. 

\begin{PROP}
	Assume that $\rho_0$ and $e_0$ are both even functions, and that (as usual) $\overline{u}_0 = 0$.  Let $p$ be any real number greater than $1$, and assume that for some $\d>0$, we have 
	\[
	\rho_0(\a) = 1, \quad \quad e_0(\a) = p|\a|^{p-1},
	\quad \quad |\a|<\d.
	\]
	Then the local dimension $d(x,\barm)$ of $\barm$ at $x = 0$ is
	\[
	d(0, \barm) := \lim_{r\to 0} \frac{\ln (\barm(-r,r))}{\ln r} = \frac1p.  
	\]
\end{PROP}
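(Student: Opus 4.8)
The plan is to read everything off the two-sided separation estimate \eqref{e:Xbarsep}: near the origin $\barX$ is comparable to the antiderivative of $e_0$, and since $e_0$ is an explicit power law there, $\barX$ behaves like a pure power, which immediately pins down the local dimension of the pushforward.

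\textbf{Step 1 (normalization).} The hypotheses that $\rho_0$ and $e_0$ are even, together with $\overline{u}_0 = 0$, force $\p_{\a_1} u_0$ to be even (since $\phi\ast\rho_0$ is even) and hence $u_0$ to be odd. The symmetry $\rho(-x,t)=\rho(x,t)$, $u(-x,t)=-u(x,t)$ is then consistent with \eqref{e:m}, so by uniqueness (Theorem \ref{t:exist}) it persists for all $t$. Consequently $X(0,t)\equiv 0$ and $X(\cdot,t)$ is odd, whence $\barX(0)=0$ and $\barX$ is odd. (Alternatively one may simply translate so that $\barX(0)=0$ and argue on each side of the origin separately.) Fix a compact convex $\O$ with $(-\d,\d)\subset\supp\rho_0\subset\O$, and note that $m_0=\rho_0\dx=\dx$ on $(-\d,\d)$.

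\textbf{Step 2 (behaviour of $\barX$ near $0$).} For $0\le\a<\d$ we compute $\int_0^\a e_0(\zeta)\dd\zeta=\int_0^\a p\zeta^{p-1}\dd\zeta=\a^p$. Inserting $\b=0$, $\g=\a$ into \eqref{e:Xbarsep} (the upper bound being available since $[0,\a]\subset\O$) gives
\[
\frac{\a^p}{\k M_0\|\phi\|_{L^\infty}}\le\barX(\a)\le\frac{\a^p}{\k M_0\uphi},\qquad 0\le\a<\d,
\]
with the mirror inequality for $\a<0$ by oddness. The lower bound in \eqref{e:Xbarsep} also shows that $\barX$ is strictly increasing on $(-\d,\d)$, since $\int_\b^\g e_0>0$ whenever $\b<\g$ lie in $(-\d,\d)$ (the function $e_0$ vanishing only at the single point $0$). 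Hence $\barX$ restricts to an increasing homeomorphism of $(-\d,\d)$ onto a neighbourhood of $0$, and $\barX^{-1}$ is well defined and increasing there.

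\textbf{Step 3 (inverting and concluding).} For $r>0$ small, $\barX^{-1}((-r,r))=(-\barX^{-1}(r),\barX^{-1}(r))\subset(-\d,\d)$, and the displayed bounds invert to
\[
(\k M_0\uphi\,r)^{1/p}\le\barX^{-1}(r)\le(\k M_0\|\phi\|_{L^\infty}\,r)^{1/p}.
\]
Since $\barm=\barX_\sharp m_0$ and $m_0=\dx$ on $(-\d,\d)$, we get $\barm((-r,r))=|\barX^{-1}((-r,r))|=2\barX^{-1}(r)$, so that
\[
2(\k M_0\uphi)^{1/p}\,r^{1/p}\le\barm((-r,r))\le2(\k M_0\|\phi\|_{L^\infty})^{1/p}\,r^{1/p}.
\]
Taking logarithms, dividing by $\ln r<0$, and letting $r\to0^+$, the constant terms contribute $O(1)/\ln r\to 0$, and we conclude $d(0,\barm)=\lim_{r\to 0}\frac{\ln\barm((-r,r))}{\ln r}=\frac1p$. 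There is no serious obstacle once \eqref{e:Xbarsep} is available; the only points requiring a little care are the identification $\barX(0)=0$ (handled by the symmetry argument, or by translation), the strict monotonicity of $\barX$ near $0$ so that $\barX^{-1}((-r,r))$ is genuinely a symmetric interval, and the verification that for small $r$ the preimage stays inside $(-\d,\d)\cap\O$, where $e_0$ is the explicit power law, $\rho_0\equiv 1$, and the upper bound of \eqref{e:Xbarsep} applies.
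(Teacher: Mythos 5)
Your proposal is correct and follows essentially the same route as the paper's proof: both use the oddness of $\barX$ forced by the symmetry hypotheses, the two-sided bound \eqref{e:Xbarsep} together with $\int_0^s e_0 = s^p$ to compare $r=\barX(s)$ with $s^p$, and the pushforward identity $\barm(-r,r)=2s$ before taking logarithms. You merely spell out a few details (propagation of the symmetry, strict monotonicity and invertibility of $\barX$ near $0$) that the paper leaves implicit.
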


Since $p>1$ is arbitrary, it follows that any local dimension in $[0,1]$ can be attained.  (The cases $d(0,\barm) = 0, 1$ are trivial.)

\begin{proof}
	Note first of all that the hypotheses guarantee that $\barX$ is an odd function.  Choose $r>0$ small, and then choose $s>0$ such that $\barX(s) = r$.  Then 
	\[
	\barm(-r,r) = m(-s,s) = 2s.
	\]
	On the other hand, if $r$ is small enough so that $s<\d$, then by \eqref{e:Xbarsep}, it follows that   
	\begin{equation}
	\frac{s^p}{\|\phi\|_{L^\infty}} = \frac{1}{\|\phi\|_{L^\infty}} \int_0^s e_0(\a)\dalpha \le \k M_0 r \le \frac{1}{\uphi} \int_0^s e_0(\a)\dalpha = \frac{s^p}{\uphi}.
	\end{equation}
	Thus 
	\[
	\frac{\ln 2s}{\ln Cs^p}
	\le \frac{\ln (\barm(-r,r))}{\ln r}
	\le  \frac{\ln 2s}{\ln cs^p} 
	\]
	Taking $r\to 0^+$ yields the desired statement.  
\end{proof}


\def\cprime{$'$}

\end{document}